\theoremstyle{definition}
\newtheorem{defn}{Definition}[section]
\theoremstyle{definition}
\newtheorem{lemma}{Lemma}[section]
\theoremstyle{definition}
\newtheorem{theorem}{Theorem}[section]
\theoremstyle{definition}
\newtheorem{corollary}{Corollary}[section]
\theoremstyle{definition}
\theoremstyle{definition}
\newtheorem{obsv}{Observation}[section]
\theoremstyle{definition}
\newtheorem{remark}{Remark}[section]
\theoremstyle{definition}
\title{A new viewpoint of the G\"odel's incompleteness theorem and it's applications}
\author{Tianheng Tsui}
\begin{document}

\maketitle

\begin{abstract}
A new viewpoint of the G\"odel's incompleteness theorem be given in this article which reveals the deep relationship between the logic and computation. Upon the results of these studies, an algorithm be given which shows how to search a proof of statement in first order logic from finite concrete examples, and an approach be proposed to improve searching mathematical proof by neural network.
 
\end{abstract}

\begin{center}
\section{Informal Introduction}
\end{center}

G\"odel's incompleteness theorem are the most famous result in modern logic. In addition to the G\"odel's original proof, there are some other proofs of this theorem \cite{wpd}. In this article, a new viewpoint to interpret the G\"odel's incompleteness theorem will be given, which lead to some interesting applications and a deeper understanding of the relation between logic and computation. 

The notion of ``proof'' plays a central role in mathematics as the means by which the truth or falsity of mathematical statements is established. The main difficulty in mathematical ``proof'' is how to prove ``infinte objects'' serve the given  mathematical statement. If a mathematical statement talk about finite objects, for example, the statement: $\forall n \in N ( (0<n<100)\to (n^2<10000) )$, we can test it one by one on the finite objects to prove or disprove the statement. That means there are only finite cases have to be tested for the statement. But if a mathematical statement talk about infinite objects, for example, the statement: $\forall n ( (n>2)\to (n^2>2n) )$, we cannot  prove it or disprove it by test it one by one, but we have to classify all the infinite objects into essentially finite different cases, in each case there is a corresponding independent reason for the statement to be true or to be not true, thus we can prove or disprove the mathematical proposition about infinite objects. So we get the not rigorous but heuristic intuition observatons:
\begin{center}
\end{center}
\begin{obsv}
A true mathematical statement can be proved within finite steps in a consistent effective formal system if and only if, the domain of the statement can be split into essentially finite different classes, in each class there is a corresponding independent reason to govern the members to serve the statement, i.e., there are only essentially finite different independent reasons to make the statement to be true. 
\end{obsv}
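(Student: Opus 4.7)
Since the statement is explicitly described as heuristic, the plan is first to fix precise surrogates for the informal terms ``essentially finite different classes'' and ``independent reasons,'' and then to prove the two directions as a genuine theorem relative to these surrogates. Concretely, I would fix a consistent effective formal system $T$ (say, a recursive extension of first-order logic with a recursive axiomatisation), identify an ``independent reason'' with an equivalence class of $T$-derivations under a suitable normal-form relation (e.g.\ cut-free derivations modulo renaming of eigenvariables and instantiation of closed terms), and identify an ``essentially finite splitting'' of the domain $D$ of a statement $\forall x\,\varphi(x)$ with a $T$-definable partition $D=D_1\sqcup\cdots\sqcup D_k$ together with, for each $i$, a uniform derivation schema $\pi_i$ such that $\pi_i$ witnesses $\varphi(a)$ for every $a\in D_i$.

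For the $(\Rightarrow)$ direction the plan is to extract the partition from the proof. Given a $T$-proof $\pi$ of $\forall x\,\varphi(x)$, I would first put $\pi$ into a normal form (Gentzen-style cut elimination, or in the arithmetic case, a Herbrand-type disjunction). A normalized proof has only finitely many distinct ``argument patterns'': each universal instantiation, induction unfolding, or case split contributes one pattern, and the whole proof uses only finitely many of them. One then reads off the classes $D_i$ as the subdomains on which a given pattern is the one actually invoked, and the ``reason'' $r_i$ as the corresponding uniform sub-derivation. The fact that only finitely many patterns can occur in a finite proof is what gives ``essentially finite'' its meaning.

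For the $(\Leftarrow)$ direction the plan is to assemble a proof from the data. Given a $T$-definable finite partition $D=D_1\sqcup\cdots\sqcup D_k$ with derivation schemata $\pi_i$ for $\forall x\in D_i\,\varphi(x)$, one combines them by a $k$-fold case analysis: $T$ proves $\bigvee_{i=1}^{k}(x\in D_i)$ by hypothesis, and from each disjunct $\pi_i$ yields $\varphi(x)$, so by $\lor$-elimination and $\forall$-introduction we get $\forall x\,\varphi(x)$. This direction is essentially book-keeping once the surrogates are fixed.

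The main obstacle, and the reason the author presents the statement only as an informal observation, is the $(\Rightarrow)$ direction: making ``essentially finite different reasons'' into something sharper than ``the normal form of the proof is a finite syntactic object.'' The risk is circularity -- defining a reason as a sub-derivation makes the equivalence trivial but semantically empty. A nontrivial formalisation would have to show that the patterns extracted from $\pi$ correspond to genuinely \emph{mathematical} case distinctions on $D$ (e.g.\ via a Herbrand-style finite set of witnessing terms, or via the finitely many induction invariants used), and that any two proofs of the same statement induce comparable partitions up to refinement. Establishing such a canonicity result is where the bulk of the work, and the connection to G\"odel incompleteness promised later in the paper, would have to be done.
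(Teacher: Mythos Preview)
The paper does not prove this observation at all: it is explicitly presented as a ``not rigorous but heuristic intuition,'' and no proof is attached to it. What the paper does instead is give, much later, a precise \emph{computational} surrogate for the slogan (Theorem~\ref{keytheorem} and the corollary following it), restricted to statements of the shape $\forall s\,(M(s)=1)$ for a Turing machine $M$. So your proposal is not wrong, but it is aimed at a target the paper never sets up.

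Your formalisation and the paper's are genuinely different. You work in classical proof theory: cut elimination, Herbrand disjunctions, and $T$-definable partitions of the domain, with ``reason'' meaning an equivalence class of normalized derivations. The paper instead encodes the computation $M(s)$ as a finite theory $\textbf{T}_{\langle M,s\rangle}$, defines a ``reason'' to be a \emph{proof type} (the sequence of checkers $\textbf{G}_\Lambda,\textbf{G}_{\text{ZFC}},\textbf{G}_{\text{IN}},\textbf{G}_{\to},\textbf{G}_{\forall}$ used at each line), and shows that the associated \emph{adjoint checker} $\textbf{CK}_\pi$ has running time bounded independently of $s$ (Lemma~\ref{bounded_running_time}). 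The $(\Leftarrow)$ direction then goes through Theorem~\ref{short_time_tm}: a finite set of checkers yields a verifier $\textbf{F}_{C}$ of bounded time, hence one that reads only a bounded prefix of $s$, hence ZFC can prove $\forall s\,\textbf{F}_{C}(s)=1$ by finite case analysis, and from that $\forall s\,M(s)=1$. The $(\Rightarrow)$ direction is the contrapositive: unprovability forces $\textbf{FS}(M,s,1)$ to be unbounded, i.e.\ no finite set of proof types suffices.

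What the paper's route buys is that the risk of circularity you correctly flag is dissolved by a concrete non-trivial lemma: a single proof type covers only those $s$ agreeing with the original input on a \emph{fixed} finite set of bit positions (the $\textbf{keyinfo}(\pi)$), so ``finitely many reasons'' has real content---it literally means ``only finitely many bits of $s$ matter.'' Your Herbrand/cut-elimination plan could in principle reach a similar conclusion, but you would need an analogue of Lemma~\ref{bounded_running_time} saying that each normalized sub-derivation depends on the parameter only through a bounded amount of data; that is exactly the step you identify as the obstacle, and the paper's computational encoding is one clean way to discharge it.
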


Let a statement $\varphi$ be true in natrual numbers and unprovable in the formal Peano system. So just within the formal Peano system, only we can do is to test it one by one on the standard natrual number $0,1,2,3,\cdots$. if $\varphi (0)$ is true, we get a true sample, if $\varphi (1)$ is true, we get another true sample, \ldots. The reason to make a natrual number $m$ satisfys $\varphi$ may be different and independent from other numbers'. There are infinite different reasons to make the statement $\varphi$ to be true. Since essentially each number $m$ has a particular reason to make the statement $\varphi (m)$ to be true and $\varphi$ is not logical consequence of Peano Arithmetic, before it is tested: ``computed'' the $\varphi (m)$ and ``abserved'' the result in the Peano system, nobody can predict true or false of $\varphi (m)$ just within Peano system. Thus when we check the value of the true but unprovable statement $\varphi$ on the natrual numbers, the results seem that you are tossing a coin and  every time the result is up! Although it is not logically impossible, it's probablilty is $0$ from prpbability theory. So we get the second intuition observatons:

\begin{obsv}
	\label{obsv2}
	Let $\mathcal{L}$ be a language, $T$ is an $\mathcal{L}-theory$, $\mathcal{M}$ is a model of $T$.
	A mathematical statement $\varphi$ is true in $\mathcal{M}$, expressible in $\mathcal{L}$, but it cannot be proved from $T$, if and only if, the domain of the statement in the model $\mathcal{M}$ cannot be split into  essentially finite different classes, in each class there is a corresponding independent reason by $T$, which is expressible in $\mathcal{L}$, govern the members to serve the statement, i.e., there are essentially infinite different independent reasons govern the whole domain to serve the unprovable true statement, but any finite reasons do not. 
\end{obsv}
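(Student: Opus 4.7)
The plan is first to make the informal phrase ``essentially finite different independent reasons'' precise enough to turn the biconditional into a mathematical statement, then to prove the two implications separately. Assuming $\varphi$ has the form $\forall x\, \theta(x)$ with $x$ ranging over the domain of $\mathcal{M}$, I would formalize the right-hand side as follows: call a finite list of $\mathcal{L}$-formulas $\psi_1(x),\ldots,\psi_n(x)$ a \emph{partition of the domain into $T$-definable reasons for $\theta$} if $T \vdash \forall x\, \bigvee_{i=1}^{n} \psi_i(x)$ and $T \vdash \forall x\,(\psi_i(x)\to \theta(x))$ for every $i$. The ``essentially finite'' clause then becomes the existence of such an $n$, and the ``independent reasons'' become the individual formulas $\psi_i$.

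With this formalization the direction from partition to provability is immediate: propositional case analysis inside the proof calculus of $T$ yields $T \vdash \forall x\,\theta(x)$, i.e.\ $T \vdash \varphi$; contrapositively, if $\varphi$ is unprovable from $T$, no such finite partition can exist, giving one half of the biconditional. For the other direction I would extract a case split from an actual proof of $\varphi$: after cut elimination or a Herbrand-style normalization of a proof of $\forall x\,\theta(x)$, each $\forall$-introduction is preceded by finitely many propositional branches, and the formulas labeling those branches become the candidate $\psi_i$.

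The hard part will not be either implication in isolation but rather the choice of formalization itself. As stated, the biconditional is trivialized by the choice $n=1$, $\psi_1=\theta$, which turns Observation~\ref{obsv2} into a tautology and drains it of content. Any honest proof has to restrict the $\psi_i$ to a proper fragment of $\mathcal{L}$ --- for example, quantifier-free, bounded, or $\Sigma_k$ formulas with $k$ strictly smaller than the quantifier complexity of $\theta$ --- so that ``$\psi_i$ is an independent reason'' is not vacuous. Selecting this fragment so that the resulting equivalence is simultaneously true and nontrivial is the real obstacle; once that is done, the forward direction becomes a proof-theoretic extraction result (essentially a form of Herbrand's theorem for universal statements), and the backward direction remains the routine case analysis sketched above.
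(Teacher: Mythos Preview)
The paper does not prove Observation~\ref{obsv2} as a mathematical statement at all; the sentence immediately following it says ``The rigorous expression of the \textbf{observation}~\ref{obsv2} is the \textbf{Theorem}~\ref{keytheorem},'' and the observation is explicitly offered as a ``not rigorous but heuristic'' remark. So there is no proof in the paper for you to match, only a choice of formalization to compare against.

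That formalization is quite different from yours. Rather than taking a ``reason'' to be a definable formula $\psi_i(x)$ in some restricted fragment of $\mathcal{L}$, the paper specializes to statements of the form $\forall s\,(M(s)=1)$ for a Turing machine $M$, and takes the ``reason'' attached to a particular input $s$ to be a \emph{normal proof} $\pi$ of the instance $M(s)=1$ from the theory $\textbf{T}_{\langle M,s\rangle}$ encoding $M$ and $s$ (Definition~\ref{normal_proof}). Two inputs share a reason when their instance proofs have the same \emph{proof type} (Definition~\ref{proof type}), detected by an adjoint checker $\textbf{CK}_\pi$. ``Finitely many reasons'' is then rendered as: the shortest-proof-length function $\textbf{FS}(M,s,1)$ is bounded in $s$. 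Theorem~\ref{keytheorem} shows that if $\forall s\,(M(s)=1)$ is independent of ZFC then $\textbf{FS}$ is unbounded, by arguing that a bound would give a finite checker set whose combined verifier $\textbf{F}_{C_n}$ has bounded running time, after which Theorem~\ref{short_time_tm} manufactures a ZFC proof of the universal statement.

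Your definable-partition formalization is closer to classical Herbrand-type proof theory and aims at the full generality of $\mathcal{L},T,\mathcal{M}$, but --- as you correctly diagnose --- it collapses under $n=1$, $\psi_1=\theta$ unless you pin down a strictly smaller fragment, and you leave that choice open. The paper's device sidesteps exactly this trivialization by replacing a \emph{count} of reasons with a \emph{metric} on instance proofs: a single short proof of one instance $M(r)=1$ only certifies the finitely many inputs sharing its $\textbf{keyinfo}(\pi)$, so there is no analogue of the degenerate one-class partition. The price is scope: the paper's version lives only over ZFC and only for sentences of the special shape $\forall s\,(M(s)=1)$, whereas your plan, if the fragment problem were solved, would say something about arbitrary $T$.
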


The rigorous expression of the \textbf{observation} \ref{obsv2} is the \textbf{Theorem} \ref{keytheorem}.

\begin{center}
	\item
	\section{Preliminaries and Notations}
\end{center}

This section is devoted to the exposition of basic preliminary material, notations and conventions which be  used throughout of this article.

The notion of algorithm can be defined in terms of Turing machines by means of the Church–Turing thesis, so the sentence ``There is an algorithm \ldots '' means ``There is a Turing machine to compute \ldots '', and sometimes Turing machine algorithms be described in very high level. If a function or a map is recursive, it means that the function or the map can be computed by a Turing machine.

It is well known that there are character encoding system \textbf{ASCII} and language encoding system \textbf{\LaTeXe}. Therefore, throughout this article, we assume all the mathematical objects be encoded by these fixed encoding systems, and the length of a mathematical object is the number binary bits to represent the object. For example, the symbols ``t'', ``$t$'' and ``$t_{298}$'' is represented as one character: ``t'', three characters: ``\$t\$'' and nine characters ``\$t\_\{298\}\$'' in \textbf{\LaTeXe}, each charater be encoded by seven bits in \textbf{ASCII}, therefore the binary length of these objects are 7, 21 and 63 respectively. 

It should be noticed that the symbol ``$ t_{298}   $'' can be represented as ``\$t\_\{298\}\$'' and ``\$  t\_\{298\}\ \ \ \$'', we take the shortest representation to calculate its length.\\

\begin{defn}
	\label{defn_string_len}
	Let $s$ is a \textbf{ASCII} string, the ASCII length of $s$, written $\textbf{asciilen}(s)$, abbreviated $\|s\|_{as}$, is the number of characters that it contains,  and $$\textbf{binlen}(s) = \textbf{asciilen}(s) \times 7 $$ named binary length of $s$.
\end{defn}

Let the formal Zermelo-Fraenkel axiomatic set theory is denoted by ZF, and ZFC denotes the theory ZF with the Axiom of Choice, and $\mathcal{\omega}$ represents the natural number set $N$ in the formal ZFC system.

\begin{defn}
	\label{defn_classical_tm}
	A Turing machine $\textbf{M}$ is a 5-tuple, $(Q,\Gamma,\delta,q_0,q_{halt})$
	
	$Q$ is a finite set of states, i.e., $\exists i(i \in \omega \wedge \Vert Q \Vert = i)$,
	
	$\Gamma$ is the tape alphabet containing the blank symbol $\sqcup$, and the left end symbol $\triangleright$,
	
	$\delta$:$Q \times \Gamma \longrightarrow Q \times \Gamma \times \{L,S,R\}$ is the transition function,
	
	if $\delta(q,\triangleright) = (p,s,b)$, then $(s = \triangleright) \wedge (b = R)$,
	
	if $\delta(q, a) = (p,s,b)$ and $a \ne \triangleright$, then $s \ne \triangleright $,
	
	$q_0 \in Q$ is the start state,
	
	$q_{halt} \in Q$ is the halt state, that is $\forall a \in \Gamma$:
	
	$\delta(q_{halt}, a) = (q_{halt},a,S)$, and
	
	$\forall q \in Q (\forall a \in \Gamma (\delta(q, a) = \delta(q_{halt}, a)) \rightarrow (q=q_{halt}) )$.

\end{defn}

	Unless otherwise indicated, it will always be assumed that the tape alphabet $\Gamma = \{0,1, \sqcup, \triangleright \}$ throughout this article, and we assume the basic notions and results of mathematical logic, such as formula, sentence, the set of all formulas is recursive \ldots, etc.


\begin{defn}
	\label{time_complexity}
	(\textbf{time complexity})  Let $M$ be a Turing machine that halts on all inputs. The running time or time complexity of $M$, denoted by $t_M$, is the function $$t_M:\ \omega \rightarrow \omega$$ where $t_M(n)$ is the maximum number of steps that $M$ uses on any input of length $n$. 
\end{defn}

In computational complexity theory, a reasonable assumption $t_M(n) \ge n $  is to allow the algorithm have time to read its input. But in this section a property of  the machine $M$ with running time $t_M(n) < n $ be given, and the relationship between it and the provability of statement in consistent effective formal system will be revealed in later.

\begin{theorem}
	\label{short_time_tm}
	Let $M$ be a Turing machine, the length of input string $s$ be denoted as $\|s\|$. If there exist a number $K$ for any input $s$,  $$ \|s\| \ge K \rightarrow t_M(\|s\|) <  \|s\| $$ then  $\forall r ( \|r\|\ge K) \rightarrow (t_M(r)<K)$ and if $(\|s\| \le K \rightarrow M(s) = 1)$ then we can prove $\forall s M(s) = 1$ in ZFC.
\end{theorem}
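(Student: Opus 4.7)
The plan is to use a cone-of-influence observation: a Turing machine running in fewer than $n$ steps on inputs of length $n$ cannot reach the end of the input, so its entire computation is determined by a short prefix of the input tape. Throughout I use the conventions of Definition \ref{defn_classical_tm}: position $0$ holds $\triangleright$, positions $1,\ldots,\|s\|$ hold the input $s$, blanks follow, and the head begins at position $0$ and moves at most one cell per step.

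For the first conclusion, fix $r$ with $n := \|r\| \ge K$ and let $r_K$ denote the prefix of $r$ of length $K$. The hypothesis applied at length $K$ gives $t_M(K) < K$, so $M$ halts on input $r_K$ within $t := t_M(r_K) \le t_M(K) < K$ steps. Since the head starts at $0$ and moves at most one cell per step, during these $t < K$ steps it visits only positions in $\{0,1,\ldots,K-1\}$; on both tapes (for $r$ and for $r_K$) those cells carry exactly the same symbols. A straightforward induction on the step count then shows that the two computations agree configuration-for-configuration as long as the head stays in this common region; in particular $M$ on $r$ also halts at step $t$, giving $t_M(r) = t_M(r_K) < K$.

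For the second conclusion, assume $M(s) = 1$ whenever $\|s\| \le K$. For $r$ with $\|r\| < K$ we are done by hypothesis; for $\|r\| \ge K$, the coupling argument above shows $M(r) = M(r_K)$, and since $\|r_K\| = K \le K$ this equals $1$. Hence $\forall s\, M(s) = 1$. To upgrade this to a proof inside ZFC, one formalizes the preceding reasoning: Turing machines and their runs are elementary finite combinatorial objects, the base hypothesis reduces to finitely many explicit computations (one per string of length $\le K$), and the coupling argument is a bounded-quantifier statement. The step deserving the most care — and in my view the main obstacle, though still routine — is the coupling lemma itself: formalizing that two runs of $M$ on inputs agreeing on an initial tape segment stay synchronized as long as the head remains within that segment, via a simultaneous induction on step number tracking state, head position, and tape contents in the visited region.
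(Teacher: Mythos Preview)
Your proposal is correct and follows essentially the same approach as the paper: both arguments hinge on the observation that if $t_M(K)<K$ then the head never leaves positions $0,\ldots,K-1$, so the computation on any input of length $\ge K$ coincides with that on its length-$K$ prefix, reducing the universal claim to the finitely many inputs of length $\le K$. Your write-up is in fact a bit more careful than the paper's own proof, spelling out the coupling induction and the ZFC formalization explicitly, but the underlying idea is the same.
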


\begin{proof}
	Let $\|s\| = K$, from the assumption $t_M(\|s\|) <  \|s\|$, the machine $M$ halts before it reads the last bit of the input $s$ i.e., it never reach to the end boundary of the input, the bits following the $(K-1)$th bit have no effect on computation.

	Therefore  if input $\|r\| \ge K$, and the first $K-1$ bits are the same as a string $s$ with $\|s\|=K$, the machine does not discriminate $r$ from  $s$, when computing on $r$, it return the same result as computing on $s$, and it halts after the same steps, i.e., $M(r)=M(s)$ and $t_M(\|r\|) = t_M(\|s\|) <  \|s\|  =K$.
	
	The number of string $s$ with $\|s\| \le K$ is finite. From the explaination above and if $(\|s\| \le K \rightarrow M(s) = 1)$,  obviously  we can prove  $\forall s M(s) = 1$ in finite steps in ZFC.

\end{proof}

\begin{defn}
	\label{frmsq}
	Let the set of all formulas is denoted by $\textbf{Frm}$, and let $\textbf{Frmsq}$ denotes the set of all finite formula sequences, i.e., $sq \in \textbf{Frmsq}$ if and only if $sq$ is a finite formula sequence: $$sq = \langle s_0,s_1,\ldots,s_r \rangle, r\in \omega$$
	For more rigorous, $sq$ is a map from $r+1$ to $\textbf{Frm}$ such that $$sq(i) \in \textbf{Frm},\ \forall i<r+1.$$

\end{defn}

\begin{defn}
	\label{rtheory}
	$\textbf{RTheory} = \{ \langle T, al \rangle |\ T \subseteq \textbf{Frm} \text{ and }  al \text{ is an algorithm  which} $\\$\text{decide whether }  \varphi \in T, \text{ for any formula } \varphi \text{ i.e., } T \text{ is recursive} \} $. If $T$  is a finite set, we assume that $T$ is a formula sequence: $\langle {\varphi}_0, {\varphi}_1, {\varphi}_2, \ldots, {\varphi}_n \rangle$ i.e., $T \in \textbf{Frmsq}$.
\end{defn}

\begin{defn}
	The set $\Lambda$ of logical axioms are arranged in seven groups:\\
	
	\begin{enumerate}
		\item Tautologies;
		\item $\forall x \alpha \rightarrow \alpha_t^x $, where $t$ is substitutable for $x$ in $\alpha$;
		\item $\forall x (\alpha \rightarrow \beta) \rightarrow (\forall x \alpha \rightarrow \forall x \beta)$;
		\item $\alpha \rightarrow \forall x \alpha$, where $x$ does not occur free in $\alpha$;
		\item $x=x$;
		\item $(x=y) \rightarrow (\alpha \rightarrow \beta)$, if $\alpha$ and $\beta$ are atomic formulas and $\beta$ is obtained from $\alpha$ by replacing an occurrence of $x$ in $\alpha$ by $y$;
		\item $\alpha_t^x \rightarrow \exists x \alpha$, where $t$ is substitutable for $x$ in $\alpha$.
	\end{enumerate}
\end{defn}

\begin{defn}
	\label{compute_checker}
	Let $\langle T, al \rangle \in \textbf{RTheory}$, a proof $\pi$ of a statement $\varphi$ from $T$ in ZFC is a finite sequence  $\langle {\varphi}_0, {\varphi}_1, {\varphi}_2, \ldots, {\varphi}_n \rangle $ of formulas such that ${\varphi}_n$ is ${\varphi}$ and for each $i \leq n$ one of the following conditions holds: 
	
	\begin{enumerate}
		\item ${\varphi}_i \in \Lambda$;  
		\item ${\varphi}_i \in \text{ZFC}$;
		\item ${\varphi}_i \in T$;
		\item $\exists j,k < i$ such that ${\varphi}_j = {\varphi}_k \rightarrow {\varphi}_i$;
		\item $\exists j < i \exists k \in \omega ({\varphi}_i = \forall x_k {\varphi}_j) $.
	\end{enumerate}
	
	and denoted as
	$$\langle T, al \rangle {\vdash}^{\pi} \varphi$$
	
	From the \textbf{definition} \ref{frmsq}, ${\pi}(i) = {\varphi}_i $, and it is easy to see that there are algorithms decide the corresponding conditions such as:
	
	\begin{enumerate}
		\item $\textbf{G}_{\Lambda}({\pi}(i)) = 1 \text{ if } {\pi}(i) \in \Lambda, \text{ otherwise, 0} $.
		\item $\textbf{G}_{\text{ZFC}}({\pi}(i)) = 1 \text{ if } {\pi}(i) \in \text{ZFC}, \text{ otherwise, 0} $.
		\item $\textbf{G}_{\text{IN}}(\langle T,al \rangle ,{\pi}(i)) = 1 \text{ if } {\pi}(i) \in T, \text{ otherwise, 0} $, note that $\textbf{G}_{\text{IN}}$ use the algorithm $al$ to decide whether ${\pi}(i) \in T$.
		\item $\textbf{G}_{\rightarrow }(\pi,i,j,k) = 1 \text{ if } (j,k<i) \text{ and } {\pi}(j) = {\pi}(k) \rightarrow {\pi}(i), \text{ otherwise, 0} $.
		\item $\textbf{G}_{\forall}(\pi,i,j,k) = 1 \text{ if }  j < i \text{ and }  k \in \omega ({\pi}(i) = \forall x_k {\pi}(j)), \text{ otherwise, 0} $.
		
	\end{enumerate}
	Let the set of above five verification algorithms is $$\textbf{G}_{\text{check}} = \{ \textbf{G}_{\Lambda}, \textbf{G}_{\text{ZFC}}, \textbf{G}_{\text{IN}}, \textbf{G}_{\rightarrow }, \textbf{G}_{\forall} \}$$
	and its member is called checker.

\end{defn}

\begin{defn}
	\label{proof type}
	(\textbf{proof type}) Let $\langle T, al \rangle \in \textbf{RTheory} \text{ and } \pi \in \textbf{Frmsq},\ \pi = \langle {\varphi}_0, {\varphi}_1, {\varphi}_2, \ldots, {\varphi}_n \rangle$, the proof type of $\langle \langle T, al \rangle, \pi \rangle$ denoted by $$\textbf{prooftype}(\langle T, al \rangle, \pi)$$ such that: If \textbf{not} $\langle T, al \rangle {\vdash}^{\pi}  {\varphi}_n$, 
	$$\textbf{prooftype}(\langle T, al \rangle, \pi) \text{ is the empty set}.$$
	else if $\langle T, al \rangle {\vdash}^{\pi}  {\varphi}_n$, then $\textbf{prooftype}(\langle T, al \rangle, \pi)$ is also called the proof type of $\langle T, al \rangle {\vdash}^{\pi}  {\varphi}_n$, and it is a same length sequence $G = \langle g_0, g_1,g_2 \ldots, g_n \rangle$ of checkers, such that:
	
	\begin{enumerate}
		\item If ${\varphi}_i \in \Lambda$, then the corresponding $g_i$ is a recursive function on $\textbf{Frmsq}$ such that $\forall sq \in \textbf{Frmsq}$: $$g_i(sq) = \textbf{G}_{\Lambda}(sq(i))$$
		Say that $g_i$ is a $\textbf{G}_{\Lambda}$ type checker.
		\item If ${\varphi}_i \in \text{ZFC}$, then the corresponding $g_i$ is a recursive function on $\textbf{Frmsq}$  such that $\forall sq \in \textbf{Frmsq}$: $$g_i(sq) = \textbf{G}_{\text{ZFC}}(sq(i))$$
		Say that $g_i$ is a $\textbf{G}_{\text{ZFC}}$ type checker.
		\item If ${\varphi}_i \in T$, then the corresponding $g_i$ is a recursive function on $\textbf{RTheory} \times \textbf{Frmsq} $  such that $\forall sq \in \textbf{Frmsq}$ and $S = \langle ST, al_s \rangle \in \textbf{RTheory}$: $$g_i(S,sq) = \textbf{G}_{\text{IN}}(S, sq(i))$$ 
		Say that $g_i$ is a $\textbf{G}_{\text{IN}}$ type checker.
		\item If $\exists j,k < i$ such that ${\varphi}_j = {\varphi}_k \rightarrow {\varphi}_i$, then the corresponding $g_i$ is a recursive function on $\textbf{Frmsq}$ such that $\forall sq \in \textbf{Frmsq}$: $$g_i(sq) = \textbf{G}_{\rightarrow }(sq,i,j,k)$$
		Say that $g_i$ is a $\textbf{G}_{\rightarrow }$ type checker.
		\item If $\exists j < i \text{ and } k \in \omega ({\varphi}_i = \forall x_k {\varphi}_j) $, then the corresponding $g_i$ is a recursive function on $\textbf{Frmsq}$ such that $\forall sq \in \textbf{Frmsq}$: $$g_i(sq) = \textbf{G}_{\forall}(sq,i,j,k)$$
		Say that $g_i$ is a $\textbf{G}_{\forall}$ type checker.
	\end{enumerate}
	$G$ is also called the adjoint check sequence of  $\pi$.
	
\end{defn}

\begin{defn}
	\label{reason_type}
	Let $\langle T_1, al_1 \rangle {\vdash}^{\pi_1} \varphi$, the adjoint check sequence of $\pi_1$ is $G_1$,  $\langle T_2, al_2 \rangle {\vdash}^{\pi_2} \phi$, the adjoint check sequence of $\pi_2$ is $G_2$. Say that the proof type of $\langle T_1, al_1 \rangle {\vdash}^{\pi_1} \varphi$ is the same as the proof type of $\langle T_2, al_2 \rangle {\vdash}^{\pi_2} \phi$ if $G_1=G_2$.
\end{defn}

\begin{defn}
	\label{adjoint_checker}
	(\textbf{adjoint checker}) It is not hard to see that $G = \langle g_0,g_1, g_2, \ldots, g_n \rangle$, the proof type of  $\langle T, al \rangle {\vdash}^{\pi} \varphi$, can be easily converted to an algorithm which decide whether a proof have the same type, denote the algorithm by $\textbf{CK}_{(\langle T, al \rangle, \pi)}$, abbreviated $\textbf{CK}_{\pi}$, and it is called the adjoint checker of $\langle T, al \rangle {\vdash}^{\pi} \varphi$, which on input $$( \langle U, b \rangle, \sigma )  ,\text{ where } \sigma =\{\phi_0,\phi_1,\phi_2,\ldots,\phi_m \}$$ it does:
	
	firstly, it compare $m$ to $n$, if $m \ne n$, return 0 and stop, else it does the following operations:\\
	
	for all $0 \le i \le n$ it compute $g_i$ such as:
	\begin{enumerate}
		\item if $g_i = \textbf{G}_{\Lambda}(sq(i))$, then it compute $\textbf{G}_{\Lambda}(\sigma(i))$;
		\item if $g_i = \textbf{G}_{\text{ZFC}}(sq(i))$, then it compute $\textbf{G}_{\text{ZFC}}(\sigma(i))$;
		\item if $g_i = \textbf{G}_{\text{IN}}(S, sq(i))$, then it compute $ \textbf{G}_{\text{IN}}(\langle U, b \rangle, \sigma(i))$;
		\item if $g_i = \textbf{G}_{\rightarrow }(sq,i,j,k)$, then it compute $\textbf{G}_{\rightarrow }(\sigma,i,j,k)$;
		\item if $g_i = \textbf{G}_{\forall}(sq,i,j,k)$, then it compute $\textbf{G}_{\forall}(\sigma,i,j,k)$.
	\end{enumerate}
	If all of the computations of $g_i,\ 0 \le i \le n$, return 1, the $\textbf{CK}_{(\langle T, al \rangle, \pi)}$ return 1 and stop, else return 0 and stop, therefore
	$$\textbf{CK}_{(\langle T, al \rangle, \pi)}(\langle U, b \rangle, \sigma)=\begin{cases}
	1, &\text{ if } \textbf{prooftype}(\langle T, al \rangle, \pi) = \textbf{prooftype}(\langle U, b \rangle, \sigma)\\
	0, &\text{otherwise}	
	\end{cases}
	$$
	Indeed, the algorithm $\textbf{CK}_{(\langle T, al \rangle, \pi)}$ is described by a group of checkers $g_i$, it is only depend on the proof sequence $\pi$, it is therefore abbreviated to $\textbf{CK}_{\pi}$.
\end{defn}

\begin{theorem}
	\label{self_check}
	$\textbf{CK}_{(\langle T, al \rangle, \pi)}((\langle T, al \rangle, \pi)=1$, i.e.,$$\text{If }\langle T, al \rangle {\vdash}^{\pi} \varphi \text{ then }\textbf{CK}_{\pi}((\langle T, al \rangle, \pi)=1$$
\end{theorem}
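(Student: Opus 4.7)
The plan is to unfold the definition of $\textbf{CK}_{(\langle T, al\rangle, \pi)}$ applied to its own generating pair $(\langle T, al\rangle, \pi)$ and to verify that every internal check succeeds. Since the input sequence has length $n+1$, the initial length comparison between $m$ and $n$ passes, so the machine proceeds to evaluate each $g_i$ for $0 \le i \le n$ and compute the conjunction of their outputs.

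Next I would walk through the five possible types of checker in the adjoint check sequence $G$, one per clause of Definition \ref{compute_checker}. In each case, the checker $g_i$ was assigned at construction time \emph{because} $\varphi_i = \pi(i)$ satisfies the corresponding clause witnessing $\langle T, al\rangle \vdash^\pi \varphi$; re-evaluating the same test on the same sequence therefore returns $1$. Concretely: a $\textbf{G}_{\Lambda}$-type checker returns $\textbf{G}_{\Lambda}(\pi(i))=1$ because $\pi(i)\in\Lambda$; a $\textbf{G}_{\text{ZFC}}$-type checker returns $1$ because $\pi(i)\in\text{ZFC}$; a $\textbf{G}_{\text{IN}}$-type checker invokes the same algorithm $al$ on $\pi(i)$ and returns $1$ since $\pi(i)\in T$; and the two structural types $\textbf{G}_{\rightarrow}$ and $\textbf{G}_{\forall}$ were assigned together with specific witness indices $j,k<i$ establishing the modus ponens or generalization step. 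These indices are recorded inside $g_i$ at the moment $G$ is extracted from $\pi$ and are reused unchanged, so when $\textbf{CK}_{\pi}$ re-invokes $\textbf{G}_{\rightarrow}(\pi,i,j,k)$ or $\textbf{G}_{\forall}(\pi,i,j,k)$ on the same $\pi$, the same equalities $\pi(j)=\pi(k)\to\pi(i)$ or $\pi(i)=\forall x_k\pi(j)$ still hold, and the checker returns $1$.

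Since every $g_i$ returns $1$ on the input $(\langle T, al\rangle, \pi)$, the final conjunction yields output $1$, establishing the theorem. There is no real obstacle here; the only point requiring care is to make explicit that the witness data $j,k$ for the rule-based clauses are hard-wired into the $g_i$ at the moment of construction, so that the self-application reduces to reverifying the clauses of Definition \ref{compute_checker} that were used to define $G$ in the first place. The statement is therefore essentially a tautology expressing reflexivity of the \emph{same proof type} relation formalized in Definition \ref{reason_type}, and its significance will emerge only when $\textbf{CK}_{\pi}$ is applied to pairs other than its origin.
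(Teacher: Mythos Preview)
Your proposal is correct and follows the same approach as the paper, which simply states that the result is obvious from Definition~\ref{adjoint_checker}. You have merely spelled out in detail the case analysis that the paper leaves implicit.
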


\begin{proof}
	It is obvious from the definition \ref{adjoint_checker}.
\end{proof}

\begin{center}
	\item
	\section{A theorem of provability and an algorithm of proof}
\end{center}

In order to prove a statement, we may enumerate formula sequences, and verify the sequences, one by one, whether or not  it is a proof sequence of the statement. But it is not a practical method.
In practice, mathematicians often have computed lots of concrete examples before proposing a conjecture by intuition, and searching a proof of it guided by intuition. In this section, the prove process will be studied from the computational viewpoint, and give a rigorous expression of the following statement: ``There are essentially infinite different independent reasons govern the whole domain to serve the unprovable true statement''(\textbf{Theorem} \ref{keytheorem}), and give an algorithm which explain some aspects of practical prove activities.\\



\begin{defn}
	\label{cpt}
	
	Let $M$ is a Turing machine: $(Q,\Gamma,\delta,q_0,q_{halt})$,
	
	$t$ is a computation tape square, or simply tape square, if $t \in \Gamma \times Q \times \{0,1\}$,
	
	
	$T_M = \{t| t = (t_0,t_1,t_2,\cdots),\forall i \in \omega\ t_i \in \Gamma \times Q \times \{0,1\} \}$,
	
	
	$Table_M = \{table| table =(tape_0,tape_1,tape_2,\cdots), \text{where}\ \forall i \in \omega (tape_i \in T_M)   \}$.
	
	
	$\pi_M$ is a projection function from $\Gamma \times Q \times \{0,1\}$ to $\Gamma$ as: $\pi_M(s,p,v) = s$.

\end{defn}

\begin{remark}
	\label{ctbl}
	A table of the Turing machine $M$ can be describe as following figure.
	
	\begin{center}
		\setlength{\unitlength}{0.5cm}
		\begin{picture}(10,10)
			\put(0,9){\line(1,0){8}}
			\put(0,8){\line(1,0){8}}
			\put(0,7){\line(1,0){8}}
			\put(0,6){\line(1,0){8}}
			\put(0,5){\line(1,0){8}}

			\put(0,9){\line(0,-1){8}}
			\put(1,9){\line(0,-1){8}}
			\put(2,9){\line(0,-1){8}}
			\put(3,9){\line(0,-1){8}}
			\put(4,9){\line(0,-1){8}}

			\put(0.1,8.35){$t_{00}$}
			\put(1.1,8.35){$t_{01}$}
			\put(2.1,8.35){$t_{02}$}
			\put(3.1,8.35){$t_{03}$}
			\put(5.1,8.35){$\cdots$}

			\put(0.1,7.35){$t_{10}$}
			\put(1.1,7.35){$t_{11}$}
			\put(2.1,7.35){$t_{12}$}
			\put(3.1,7.35){$t_{13}$}
			\put(5.1,7.35){$\cdots$}

			\put(0.1,6.35){$t_{20}$}
			\put(1.1,6.35){$t_{21}$}
			\put(2.1,6.35){$t_{22}$}
			\put(3.1,6.35){$t_{23}$}
			\put(5.1,6.35){$\cdots$}

			\put(0.1,5.35){$t_{30}$}
			\put(1.1,5.35){$t_{31}$}
			\put(2.1,5.35){$t_{32}$}
			\put(3.1,5.35){$t_{33}$}
			\put(5.1,5.35){$\cdots$}
			
			\put(0.1,4.35){$\cdots$}
			\put(1.1,4.35){$\cdots$}
			\put(2.1,4.35){$\cdots$}
			\put(3.1,4.35){$\cdots$}
			\put(5.1,4.35){$\cdots$}

			\put(0.1,3.35){$\cdots$}
			\put(1.1,3.35){$\cdots$}
			\put(2.1,3.35){$\cdots$}
			\put(3.1,3.35){$\cdots$}
			\put(5.1,3.35){$\cdots$}

			\put(0.1,2.35){$\cdots$}
			\put(1.1,2.35){$\cdots$}
			\put(2.1,2.35){$\cdots$}
			\put(3.1,2.35){$\cdots$}
			\put(5.1,2.35){$\cdots$}

		\end{picture}
		
	\end{center}
	
	
	The rows are tape configurations of the Turing machine $M$  which satisfy some conditions.

	In ZFC system, a tape configuration of the Turing machine $M$: $$t = (t_0,t_1,t_2,\cdots),(\forall i \in \omega\ t_i \in \Gamma \times Q \times \{0,1\})$$ can be defined as function from $\omega$ to $\Gamma \times Q \times \{0,1\}$,  and a table can be defined as function $tb$ from $\omega$ to $T_M$  satisfys extra conditions which will be shown later.
	
	 
	 Therefore the $t_{i,j}$ as above figure can be represented as $t(i)(j)$ in ZFC formal system. For convenience later,  let ``$t_{i,j}$'' is the abbreviation of ``$t(i)(j)$'' i.e., $t$ is a function from $\omega$ to $T_M$, $t(i)$ is a function from $\omega$ to $\Gamma \times Q \times \{0,1\}$ and $t_{i,j} = t(i)(j) \in \Gamma \times Q \times \{0,1\}$, a computation tape square.
	 
	 If $t_{i,j} = t(i)(j) \in \Gamma \times Q \times \{1\}$, we say the machine $M$ is reading the $j$th square of the tape at step $i$.
\end{remark}

\begin{defn}
	\label{def_tm_and_input}
	It is easy to see that the relation $t \in Table_M \text{ and } \pi_M$ can be defined within finite formulas in ZFC. We say that $t$ is a table of $M$ computing on input $s$, if $t \in Table_M$ and satisfys extra conditions such as:
	
	\begin{itemize}
		\item  $\forall i  ( t_{i,0} \in \{ \triangleright \} \times Q \times \{0,1\}) $, this means the leftmost end of a tape is always markered by $\triangleright$.
		\item  $\forall i \exists k   (t_{i,k} \in \Gamma \times Q \times \{1\})  \wedge (\forall j  (j \ne k) \rightarrow (t_{i,j} \in \Gamma \times Q \times \{0\}) )$, this formula means that there is one and only one square be reading at any time by the machine.	
	\end{itemize}
	and some interpretations of the transition function $\delta$: $Q \times \Gamma \longrightarrow Q \times \Gamma \times \{L,S,R\}$ such as:
	
	\begin{itemize}
		\item if $\delta(q,a) = (p,b,L)$, then the corresponding formula is 
		
		$\forall i \forall j (t_{i,j} = (a, q, 1) \rightarrow ( t_{(i+1),j} = (b,p,0) )\wedge( t_{(i+1),(j-1)} = (\pi_{M}(t_{i,(j-1)}),p,1) )\wedge (\forall k (k \ne j \wedge k \ne j-1) \rightarrow t_{(i+1),k} = (\pi_{M}(t_{i,k}),p,0)) ) $.
		
		\item if $\delta(q,a) = (p,b,S)$, then the corresponding formula is 
		
		$\forall i \forall j (t_{i,j} = (a, q, 1) \rightarrow ( t_{(i+1),j} = (b,p,1) )\wedge (\forall k (k \ne j) \rightarrow t_{(i+1),k} = (\pi_{M}(t_{i,k}),p,0)) ) $.
		
		\item if $\delta(q,a) = (p,b,R)$, then the corresponding formula is 
		
		$\forall i \forall j (t_{i,j} = (a, q, 1) \rightarrow ( t_{(i+1),j} = (b,p,0) )\wedge( t_{(i+1),(j+1)} = (\pi_{M}(t_{i,(j+1)}),p,1) )\wedge (\forall k (k \ne j \wedge k \ne j+1) \rightarrow t_{(i+1),k} = (\pi_{M}(t_{i,k}),p,0)) ) $.
		\item \ldots
		\item \ldots
		\item \ldots
		\item $\exists m (\forall j  (j < m) \rightarrow t_{0,j} \ne (\sqcup, q_0, 0) ) \wedge (\forall j  (j \ge m) \rightarrow t_{0,j} = (\sqcup, q_0, 0) ) $,
	\end{itemize}
	Let the set of these formulas are arranged as formula sequence: $\langle{\varphi}_0,{\varphi}_1,{\varphi}_2,\ldots,{\varphi}_k \rangle$, and denoted by $\textbf{def}_M$.\\

	If $s = s_0s_1s_2\ldots s_l,\ s_i \in \{0,1\}, 0 \leq i \leq l$ then there are $l+3$ formulas to describe the input $s$ on $M$ such as:
	
	\begin{itemize}
		\item ${\varphi}_{k+1} $: $\forall j  (j > l+1) \rightarrow t_{0j} = (\sqcup, q_0, 0)$,
		\item ${\varphi}_{k+2} $: $ t_{0,0} = (\triangleright, q_0, 1)$,
		\item ${\varphi}_{k+3} $: $ t_{0,1} = (s_0, q_0, 0)$,
		\item ${\varphi}_{k+4} $: $ t_{0,2} = (s_1, q_0, 0)$,
		\item ${\varphi}_{k+5} $: $ t_{0,3} = (s_2, q_0, 0)$,
		\item \ldots,
		\item ${\varphi}_{k+l+3} $: $ t_{0,(l+1)} = (s_l, q_0, 0)$.
		
	\end{itemize}
	Let the set of these $l+3$ formulas be denoted by $\textbf{input}_s$.

	It is not hard to see that $\textbf{def}_M$ is unchanged if the machine $M$ is fixed, and if $\|s\| = l$ then $\textbf{input}_s$ have $l+3$ formulas.

	$\textbf{T}_{\langle M,s \rangle} $ is a formula sequence such that  $$\textbf{T}_{\langle M,s \rangle} = \langle \textbf{def}_M \cup \textbf{input}_s \rangle = \langle {\varphi}_{0},{\varphi}_{1},{\varphi}_{2},\ldots,{\varphi}_{k+l+3} \rangle  $$ where we arrange its members as: the first $k+1$ formulas belonging to $\textbf{def}_M$, the following $l+3$ formulas belonging to $\textbf{input}_s$, and keep the order as above described, i.e., 
	\begin{itemize}
		\item ${\varphi}_{0}$,
		\item ${\varphi}_{1}$,
		\item ${\varphi}_{2}$,
		\item \ldots,
		\item ${\varphi}_{k}   \text{ is } \exists m (\forall j  (j < m) \rightarrow t_{0,j} \ne (\sqcup, q_0, 0) ) \wedge (\forall j  (j \ge m) \rightarrow t_{0,j} = (\sqcup, q_0, 0) ) $,
		\item ${\varphi}_{k+1} \text{ is } \forall j  (j > l+1) \rightarrow t_{0,j} = (\sqcup, q_0, 0)$,
		\item ${\varphi}_{k+2} \text{ is } t_{0,0} = (\triangleright, q_0, 1)$,
		\item ${\varphi}_{k+3} \text{ is } t_{0,1} = (s_0, q_0, 0)$,
		\item ${\varphi}_{k+4} \text{ is } t_{0,2} = (s_1, q_0, 0)$,
		\item ${\varphi}_{k+5} \text{ is } t_{0,3} = (s_2, q_0, 0)$,
		\item \ldots,
		\item ${\varphi}_{k+l+3} \text{ is } t_{0,(l+1)} = (s_l, q_0, 0)$.
		
	\end{itemize}
	Thus $\textbf{T}_{\langle M,s \rangle}$ is a $k+l+4$ formulas sequence as above.
\end{defn}

Thus we can define an algorithm $\textbf{AL}$ whose input is $\langle \varphi,\textbf{T}_{\langle M,s \rangle} \rangle$ and decide whether $\varphi \in \textbf{T}_{\langle M,s \rangle}$, such as:

\begin{defn}
	\label{searching_al}
	(\textbf{AL})  The algorithm on input $\langle \varphi,\textbf{T}_{\langle M,s \rangle} \rangle$, it compare $\varphi$ to each ${\varphi}_{i} \text{ in } \textbf{T}_{\langle M,s \rangle} $ 
	
	\begin{itemize}
		\item it compare  $\varphi$ to ${\varphi}_{0}$, if they match, return yes and halt, else:
		\item it compare  $\varphi$ to ${\varphi}_{1}$, if they match, return yes and halt, else:
		\item it compare  $\varphi$ to ${\varphi}_{2}$, if they match, return yes and halt, else:
		\item \ldots
		\item it compare  $\varphi$ to ${\varphi}_{k+l+3}$, if they match, return yes and halt, else return no and halt.
	\end{itemize}
	denoted such algorithm by $\textbf{AL}$ throughout this article.
\end{defn}

\begin{defn}
	\label{normal_proof}
	(\textbf{normal proof})  Let $ M $ is a Turing machine $(Q,\Gamma,\delta,q_0,q_{halt})$, $v \in \{0,1\}, s \in \{0,1\}^*$, and \textbf{AL} is the algorithm as the \textbf{definition} \ref{searching_al}. A normal proof of $ M(s) = v$ in ZFC is a formula sequence $\pi = \{ {\varphi}_0, {\varphi}_1, {\varphi}_2, \ldots, {\varphi}_r \}$ such that: $${\varphi}_r \text{ is the formula: } \exists i t_{i,1}=(v,q_{halt},1) \text{ and } \langle \textbf{T}_{\langle M,s \rangle}, \textbf{AL} \rangle {\vdash}^{\pi} {\varphi}_r$$
	The ASCII length of the normal proof be denoted by $\|\pi\|_{as}$, defined as $$\|\pi\|_{as} = \sum_{i=0}^{r}\textbf{asciilen}({\varphi}_{i})$$
\end{defn}

\begin{defn}
	\label{string_order}(\textbf{string order})
	Let $x$ and $y$ are two ASCII strings, we say that $x$ precede $y$, written $x <_s y$, if $\|x\|_{as} < \|y\|_{as}$, or  $\|x\|_{as} = \|y\|_{as}$ and $x$ precede $y$ in dictionary order.
\end{defn}

\begin{defn}
	\label{join_str}
	Let $x$ and $y$ are two ASCII strings, and $\|x\|_{as}=n$, $\|y\|_{as}=m$, the concatenation of $x$ and $y$, written $x \circ y$, is the string obtained by appending $y$ to the end of $x$, i.e., $x \circ y = x_1\cdots x_n y_1 \cdots y_m $.
\end{defn}

\begin{defn}
	\label{concatenate}
	Let $S_1$ and $S_2$ are two finite formula sequences, such that: $$S_1=\langle \varphi_{0},\varphi_{1},\varphi_{2},\ldots,\varphi_{n} \rangle$$ $$S_2=\langle \sigma_{0},\sigma_{1},\sigma_{2},\ldots,\sigma_{m} \rangle$$
	the concatenation of $S_1$ and $S_2$, written $S_1 + S_2$, is the finite formula sequence obtained by appending $S_2$ to the end of $S_1$: $$S_1 + S_2 = \langle \varphi_{0},\varphi_{1},\varphi_{2},\ldots,\varphi_{n},\sigma_{0},\sigma_{1},\sigma_{2},\ldots,\sigma_{m} \rangle$$
	In general, $S_1 + S_2 \ne S_2 + S_1$, and it is obvious that the operation ``+'' satisfy associative law. So if $S_0,S_1,S_2,\ldots,S_n$ are all finite formula sequences, we can define $$S_0+S_1+S_2+\ldots +S_n = (\ldots((S_0+S_1)+S_2)+\ldots) +S_n$$denoted by $$\sum_{i=0}^{n}S_i$$
\end{defn}

\begin{defn}
	\label{seq_order}(\textbf{sequence order})
	Let $S_1$ and $S_2$ are two finite formula sequences, such that: $$S_1=\langle \varphi_{0},\varphi_{1},\varphi_{2},\ldots,\varphi_{n} \rangle $$ $$S_2=\langle \sigma_{0},\sigma_{1},\sigma_{2},\ldots,\sigma_{m} \rangle $$
	and we take each formula as an ASCII string, say that $S_1$ precede $S_2$, written $S_1 <_s S_2$, if and only if $$ \varphi_{0}\circ \varphi_{1}\circ \varphi_{2} \ldots,\circ \varphi_{n} <_s \sigma_{0} \circ \sigma_{1} \circ \sigma_{2} \ldots \circ \sigma_{m}$$
\end{defn}



\begin{remark}
	\label{fsd}
	Let $\pi_1$ and $\pi_2$ are two normal form proofs of $M(s)=v$, if $\| \pi_1 \|_{as} < \| \pi_2 \|_{as}$, we say $\pi_1$ is shorter than $\pi_2$. Indeed, if $M(s)=v$, the process of $M$ computing on $s$ can be easily converted to a normal proof $\pi$ such that $ \langle \textbf{T}_{\langle M,s \rangle}, \textbf{AL} \rangle {\vdash}^{\pi} \exists i\ (t_{i,1}=(v,q_{halt},1) )$. Let its ASCII length is $\| \pi \|_{as} = n$. Since the normal proofs of $M(s)=v$ with ASCII length shorter than $n$ are finite, we can enumerate formula sequences in sequence order, and check whether it is a normal proof of $M(s)=v$. Hence it is easy to see that there is an algorithm, for any $M(s)=v$, it give the minimum of the set $S = \{k|\  k=\| \pi \|_{as},\ \pi \text{ is a proof of } M(s) = v \text{ in ZFC }\}$.
\end{remark}

\begin{defn}
	\label{fs} (\textbf{FS})\ 
	Fix an algorithm which can find the shortest ASCII length of normal proofs as in the \textbf{remark} \ref{fsd} , throughout this article denote it by $\textbf{FS}(M,s,v)$, abbreviated  $\textbf{FS}(M,s)$, if the machine $M$ halts on all inputs, i.e., for any $M(s)=v$, $$\textbf{FS}(M,s,v) = \textbf{min} \{k|\  k=\| \pi \|_{as},\ \langle \textbf{T}_{\langle M,s \rangle}, \textbf{AL} \rangle {\vdash}^{\pi} \exists i\ (t_{i,1}=(v,q_{halt},1) ) \} $$
	
	$\textbf{FS}(M,s)$= ``On input $\langle M,s \rangle$, an encoding of a machine $M$ and a string $s$:
	\begin{enumerate}
		\item Using the description of $M$ and $s$, compute $M$ on $s$ and get result $v$.
		\item Enumerate formula sequence in the sequence order, every time that $\textbf{FS}(M,s)$ outputs a sequence $S$, verify whether it is satisfies $$\langle \textbf{T}_{\langle M,s \rangle}, \textbf{AL} \rangle {\vdash}^{S} \exists i\ (t_{i,1}=(v,q_{halt},1) )$$
		\item Let the first formula sequence $\pi$ satisfies $$\langle \textbf{T}_{\langle M,s \rangle}, \textbf{AL} \rangle {\vdash}^{\pi} \exists i\ (t_{i,1}=(v,q_{halt},1) )$$
		then return the result $\|\pi\|_{as}$''
	\end{enumerate}
	
\end{defn}

\begin{defn}
	\label{adjoint_proof_complexity} (\textbf{adjoin proof complexity})\
	Let $M$ be a Turing machine that halts on all inputs. The adjoint proof complexity of $M$ is the function $f: \omega \rightarrow \omega$, where $f(n)$ is the maximum number of the set: $\{\textbf{FS}(M,r)|\ \|r\|=n \}$, denote such function by $\textbf{apf}_M$, i.e., if $\|s\|=n$ then $$\textbf{apf}_M(\|s\|)=\textbf{apf}_M(n)=f(n) = \textbf{max}\{ \textbf{FS}(M,r)|\ \|r\|=\|s\| \} $$
\end{defn}

Let $M$ be a Turing machine which compute a function $$g: \{0,1\}^* \rightarrow \{0,1\}$$ and $M(s)=v,\ \|s\|=n$. $t$ is the table of $M$ computing on input $s$ and the time complexity of $M$ is $f(n)$, then the process of $M$ computing on input $s$ can be converted to a special normal proof of $ M(s) = v$ as the following.


\begin{defn}
	\label{special_proof}
	Let $M$ be a Turing machine which compute a function $$g: \{0,1\}^* \rightarrow \{0,1\}$$ and $M(s)=v,\ \|s\|=n$, ``$t$'' is the table of $M$ computing on input $s$, the time complexity of $M$ is $f(n)$. It is obvious that there are only finite tape squares be affected by the computation, exactly not exceed $(f(n)+1) \times (f(n)+1)$ tape squares. 
	
	The content of each tape square is determined by certain squares in the preceding row. If we know the values at $t_{(i-1),(j-1)}, t_{(i-1),(j)},\text{ and } t_{(i-1),(j+1)} $, we
	can obtain the value at $t_{i,j}$ with $M$'s transition function. For example:
	
	Let $100< f(n)+1$, and if we have proved the formula: $$t_{99,100} = (0,q,1)$$and a transition rule is:
	
	\begin{eqnarray}
	&\forall i \forall j (t_{i,j} = (0, q, 1) \rightarrow \nonumber \\
	&( t_{(i+1),j} = (1,p,0) )\nonumber \\
	&\wedge( t_{(i+1),(j+1)} = (\pi_{M}(t_{i,(j+1)}),p,1) )\nonumber \\
	&\wedge (\forall k (k \ne j \wedge k \ne j+1) \rightarrow t_{(i+1),k} = (\pi_{M}(t_{i,k}),p,0)) )\label{ex_02}
	\end{eqnarray}
	Then we can prove the formula: $t_{100,100} = (1,p,0)$ from the above two formulas, let $\gamma$ denotes the formula (\ref{ex_02}), $\pi$ denotes a formula sequence, a special normal proof of $ M(s) = v$,  the section of proving  $t_{100,100} = (1,p,0)$ as following:
	
	\begin{description}
		\item[$\pi(n_1)$:] $t_{99,100} = (0,q,1)$, \\(previously proved)
		\item \ldots
		\item \ldots
		\item \ldots
		\item[$\pi(c):$] $\gamma$, \\ ($ \textbf{G}_{\text{IN}}(\langle \textbf{T}_{\langle M,s \rangle}, \textbf{AL} \rangle, \pi(c))=1$, i.e., $\pi(c) \in \textbf{T}_{\langle M,s \rangle}$, indeed $\pi(c) \in \textbf{def}_M$ )
		\item[$\pi(c+1)$:] $\gamma \rightarrow ( \forall i \forall j t_{i,j} = (0, q, 1) \rightarrow  ( t_{(i+1),j} = (1,p,0) ) )$, \\ ($\textbf{G}_{\Lambda}({\pi}(c+1)) = 1$, i.e., ${\pi}(c+1)\in \Lambda$, indeed $\pi(c+1)$ is a tautology)
		\item[$\pi(c+2)$:] $\forall i \forall j t_{i,j} = (0, q, 1) \rightarrow  ( t_{(i+1),j} = (1,p,0) )$,  \\($\textbf{G}_{\rightarrow }(\pi,c+2,c+1,c) = 1$, i.e., $\pi(c+2)$ is obtained by modus ponens from $\pi(c+1)$ and $\pi(c)$)
		\item[$\pi(c+3)$:] $(\forall i \forall j t_{i,j} = (0, q, 1) \rightarrow  ( t_{(i+1),j} = (1,p,0) )) \rightarrow \\(t_{99,100} = (0, q, 1) \rightarrow  ( t_{100,100} = (1,p,0) ))$, \\ ($\textbf{G}_{\Lambda}({\pi}(c+3)) = 1$, i.e., ${\pi}(c+3)\in \Lambda $ )
		\item[$\pi(c+4)$:] $t_{99,100} = (0, q, 1) \rightarrow  ( t_{100,100} = (1,p,0) )$, \\($\textbf{G}_{\rightarrow }(\pi,c+4,c+3,c+2) = 1$, i.e., $\pi(c+4)$ is obtained by modus ponens from $\pi(c+3)$ and $\pi(c+2)$)
		\item[$\pi(c+5)$:] $t_{100,100} = (1,p,0)$ \\ ($\textbf{G}_{\rightarrow }(\pi,c+5,c+4,n_1) = 1$, i.e., $\pi(c+5)$ is obtained by modus ponens from $\pi(c+4)$ and $\pi(n_1)$)
	\end{description}
	The six formulas from $\pi(c)$ to $\pi(c+5)$ form a proof section of $t_{100,100} = (1,p,0)$, denoted by $\textbf{SEC}_{\pi}(t_{100,100} = (1,p,0))$.
	
	$t_{100,100}$ is represented by \$t\_\{100,100\}\$ and $t_{99,100}$ is represented by \$t\_\{99,100\}\$ in \textbf{\LaTeXe}, and it is obvious that $\| 99 \|_{as} < \| 100 \|_{as} < 100 < f(\|s\|)+1=f(n) +1 $\\therefore: $$\| t_{99,100} \|_{as} = \| \text{\$t\_\{99,100\}\$}  \|_{as} < 2f(n) +9$$ $$\| t_{100,100} \|_{as}  = \| \text{\$t\_\{100,100\}\$}  \|_{as} < 2f(n) +9$$
	Because $\textbf{def}_M$ is a sequence formulas $\langle{\varphi}_0,{\varphi}_1,{\varphi}_2,\ldots,{\varphi}_k\rangle$ as in the \textbf{definition} \ref{def_tm_and_input}, we can define $$\|\textbf{def}_M\|_{as} = \sum_{i=0}^{k}\|{\varphi}_i\|_{as}$$
	and it is easy to see
	
	\begin{enumerate}
		\item $\pi(c) \in \textbf{def}_M$, so $\| \pi(c) \|_{as} \le \| \textbf{def}_M \|_{as}$
		\item $\| \pi(c+1) \|_{as} < 2\| \pi(c) \|_{as} \le 2\| \textbf{def}_M \|_{as}$
		\item $\| \pi(c+2) \|_{as} < \| \pi(c) \|_{as} \le \| \textbf{def}_M \|_{as}$
		\item $\| \pi(c+3) \|_{as} < 2\| \pi(c+2) \|_{as} +  4f(n) + 20 < 2\| \textbf{def}_M \|_{as} + 4f(n) + 20 $
		\item $\| \pi(c+4) \|_{as} < \| \pi(c+3) \|_{as} < 2\| \textbf{def}_M \|_{as} + 4f(n) + 20$
		\item $\| \pi(c+5) \|_{as} < \| \pi(c+4) \|_{as} < 2\| \textbf{def}_M \|_{as} + 4f(n) + 20$
	\end{enumerate}

	$$ \sum_{i=0}^{5}\|\pi(c+i)\|_{as}  < 12f(n) + 10\| \textbf{def}_M \|_{as} + 60$$ 
	Using the same approach as proving  $t_{100,100} = (1,p,0)$ above, we can prove a formula $t_{a,b} = v_{ab}$ for each pair $\langle a,b \rangle ,\ 0 \le a,b \le f(n)+1$, denoted by $\textbf{SEC}_{\pi}(t_{a,b} = v_{ab})$, is called the proof section of $t_{a,b} = v_{ab}$ where the $v_{ab}$ is the value of the tape square $t_{a,b}$ on the table of $M(s)=v$.
	
	The idea behind this approach is simple, the proof formula sequence is just a  description of $M$ computing on input $s$: the tape configuration  $t_i$ determined by the preceding tape configuration  $t_{i-1}$ and an appropriate transition rule of the machine $M$. Note that

	\begin{enumerate}
		\item $\textbf{SEC}_{\pi}(  t_{0,0} = (\triangleright, q_0, 1))$ is just the formula itself, because from the \textbf{definition} \ref{def_tm_and_input} $$ (t_{0,0} = (\triangleright, q_0, 1)) \in \textbf{T}_{\langle M,s \rangle}$$
		\item The same reasoning applies to any proof section of $$t_{0,b} = (s_b, q_0, 0),\ \ 0<b \le n+1 $$ i.e., $\textbf{SEC}_{\pi}(  t_{0,b} = (s_b, q_0, 0) )$ is just one formula, itself.
		\item From the \textbf{definition} \ref{def_tm_and_input}, any $b>n+1 $, $\textbf{SEC}_{\pi}(  t_{0,b} =  (\sqcup, q_0, 0) )$ is following formula sequence:
		$$b>n+1$$
		$$\forall j  (j > n+1) \rightarrow t_{0j} = (\sqcup, q_0, 0)$$		
		$$( \forall j  (j > n+1) \rightarrow t_{0j} = (\sqcup, q_0, 0) ) \rightarrow ( (b > n+1) \rightarrow t_{0,b} = (\sqcup, q_0, 0) ) $$
		$$(b > n+1) \rightarrow t_{0,b} = (\sqcup, q_0, 0)$$
		$$t_{0,b} =  (\sqcup, q_0, 0)$$
		\item For any $a>0$, the $\textbf{SEC}_{\pi}(t_{a,b} = v_{ab})$ like the case $\textbf{SEC}_{\pi}(t_{100,100} = (1,p,0))$ shown above, is a description of how the content of the tape square $t_{a,b}$ be determined by certain squares in the preceding row.
	\end{enumerate}
	Hence it is easy to see that there exist two numbers $K$ and $C$, independent of the input $s$, for all $$0 \le a,b \le f(\|s\|)+1=f(n)+1,\  \| \textbf{SEC}_{\pi}(t_{a,b} = v_{ab}) \|_{as} < Kf(n) + C  $$

	Since $t$ is the table of $M(s)=v$, there exists a number $d \le f(n)+1$ satisfys  $t_{d,1}=(v,q_{halt},1)$, and we can prove the formula  $t_{d,1}=(v,q_{halt},1)$ like in the described situation  $t_{99,100} = (0,q,1)$ above. Then $$(t_{d,1}=(v,q_{halt},1)) \rightarrow \exists i t_{i,1}=(v,q_{halt},1),\ \ \ (\text{denoted by }{\varphi}_{r-1})$$ $$\exists i t_{i,1}=(v,q_{halt},1),\ \ \  (\text{denoted by }{\varphi}_r)$$ is the proof of $\exists i t_{i,1}=(v,q_{halt},1)$ from $t_{d,1}=(v,q_{halt},1)$. Obviously, $$ \|\varphi_{r-1}\|_{as} + \|\varphi_r\|_{as} < Kf(n) + C $$

	Thus there is a special normal proof of $ M(s) = v$ in ZFC, such that: 
	
	\begin{enumerate}
		\item $$\pi = \{ {\varphi}_0, {\varphi}_1, {\varphi}_2, \ldots, {\varphi}_r \}\\ \text{ \ }= (\sum_{i=0}^{f(n)+1} \sum_{j=0}^{f(n)+1}\textbf{SEC}_{\pi}(t_{i,j} = v_{ij})\ ) + \langle {\varphi}_{r-1} \rangle + \langle {\varphi}_r \rangle $$
		
		Note that the operation ``+'' and ``$\sum$'' on formula sequences are defined in \textbf{definition} \ref{concatenate}.

		\item ${\varphi}_{r-1} \text{ is the formula: } (t_{d,1}=(v,q_{halt},1)) \rightarrow \exists i t_{i,1}=(v,q_{halt},1) $.
		\item ${\varphi}_r \text{ is the formula: } \exists i t_{i,1}=(v,q_{halt},1) $.
		
		\item $\langle \textbf{T}_{\langle M,s \rangle}, \textbf{AL} \rangle {\vdash}^{\pi} {\varphi}_r$
	\end{enumerate}
	We denote this special normal proof of $ M(s) = v$ as $\Pi_{\langle M,s \rangle}$. Therefore, $$ \| \Pi_{\langle M,s \rangle} \|_{as} = \sum_{i=0}^{f(\|s\|)+1} \sum_{j=0}^{f(\|s\|)+1}\|\textbf{SEC}_{\pi}(t_{i,j} = v_{ij})\|_{as} + \| {\varphi}_{r-1}  \|_{as} + \|  {\varphi}_r  \|_{as} $$\\$$<  [\sum_{i=0}^{f(\|s\|)+1} \sum_{j=0}^{f(\|s\|)+1} (Kf(\|s\|) + C)] + Kf(\|s\|) + C$$\\$$ = [(f(\|s\|)+2)(f(\|s\|)+2)+1](Kf(\|s\|)+C)$$Where the two numbers $K$ and $C$ are independent of the input $s$.

\end{defn}
It is obvious that 
\begin{equation}
	\label{fs_ineq}
	\textbf{FS}(M,s) \le \| \Pi_{\langle M,s \rangle} \|_{as} < [(f(\|s\|)+2)(f(\|s\|)+2)+1](Kf(\|s\|)+C)
\end{equation}
Therefore we get the following lemma:

\begin{lemma}
	\label{poly_proof_comlexity} (\textbf{polynomial proof complexity})\\
	Let $M$ be a polynomial time Turing machine. then its adjoint proof complexity is also a polynomial, i.e., $\textbf{apf}_M$ is bounded by a polynomial.
\end{lemma}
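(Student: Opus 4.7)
The plan is to read off the conclusion directly from inequality (\ref{fs_ineq}), which has already done the substantive work of constructing an explicit normal proof $\Pi_{\langle M,s\rangle}$ and bounding its ASCII length in terms of $f(n) = t_M(n)$. First I would invoke the hypothesis that $M$ runs in polynomial time to fix a polynomial $p$ with $t_M(n) \le p(n)$ for every $n \in \omega$.

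Next, for any input $s$ with $\|s\| = n$, inequality (\ref{fs_ineq}) gives
$$\textbf{FS}(M,s) \le \|\Pi_{\langle M,s\rangle}\|_{as} < \bigl[(f(n)+2)^2 + 1\bigr]\bigl(Kf(n) + C\bigr),$$
where $K$ and $C$ are constants independent of $s$. Because the right-hand side is monotone increasing in $f(n)$, substituting $f(n) \le p(n)$ yields the bound $q(n) = \bigl[(p(n)+2)^2 + 1\bigr]\bigl(Kp(n) + C\bigr)$, which depends only on $n$ and is manifestly a polynomial in $n$. Since $q(n)$ does not depend on the particular choice of $s$ of length $n$, taking the maximum over all $r$ with $\|r\| = n$ gives
$$\textbf{apf}_M(n) = \textbf{max}\{\textbf{FS}(M,r) :\ \|r\| = n\} \le q(n),$$
a polynomial bound on $\textbf{apf}_M$, as claimed.

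The main obstacle, if any, is not in this short deduction but was already cleared in the construction of the proof sections $\textbf{SEC}_\pi(t_{a,b} = v_{ab})$ in \textbf{definition} \ref{special_proof}: the essential input is the uniform per-square estimate $\|\textbf{SEC}_\pi(t_{a,b} = v_{ab})\|_{as} < Kf(n) + C$ with $K$ and $C$ independent of $s$, together with the fact that at most $(f(n)+2)^2$ tape squares are affected by the computation. Given those two ingredients the polynomial control on $\textbf{apf}_M$ is a straightforward bookkeeping consequence of (\ref{fs_ineq}) and the polynomial time hypothesis.
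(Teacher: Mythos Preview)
Your proposal is correct and follows essentially the same route as the paper: both invoke inequality (\ref{fs_ineq}) for $\textbf{FS}(M,s)$ and then observe that when the time complexity $f$ is a polynomial the right-hand side is a polynomial in $\|s\|$, yielding the bound on $\textbf{apf}_M$ after taking the maximum over inputs of a given length. Your write-up is slightly more explicit about separating the actual running time $t_M$ from a bounding polynomial $p$ and about the monotonicity of the right-hand side, but the argument is the same.
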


\begin{proof}
	Let the time complexity of $M$ is a polynomial $f(n)$.  From the \textbf{definition} \ref{adjoint_proof_complexity}, $\textbf{apf}_M(\|s\|) = \textbf{max}\{ \textbf{FS}(M,r)|\ \|r\|=\|s\| \} $. since the above inequality (\ref{fs_ineq}), we get $$\textbf{apf}_M(\|s\|) <  [(f(\|s\|)+2)(f(\|s\|)+2)+1](Kf(\|s\|)+C)$$
\end{proof}

\begin{lemma}
	\label{bounded_running_time}
	(\textbf{bounded running time})\\Let $ M $ is a Turing machine $(Q,\Gamma,\delta,q_0,q_{halt})$, $v \in \{0,1\}, r \in \{0,1\}^*$, and \textbf{AL} is the algorithm as the \textbf{definition} \ref{searching_al}, $ M(r) = v$, $\pi$ is a formula sequence:$$  \langle {\varphi}_0, {\varphi}_1, {\varphi}_2, \ldots, {\varphi}_n \rangle $$ such that: $${\varphi}_n \text{ is the formula: } \exists i t_{i,1}=(v,q_{halt},1) \text{ and } \langle \textbf{T}_{\langle M,r \rangle}, \textbf{AL} \rangle {\vdash}^{\pi} {\varphi}_n$$
	That is $\pi$ is a normal proof of $ M(r) = v$, thus we can define a Turing machine on $s \in \{0,1\}^*$ as: $$\textbf{f}(s) = \textbf{CK}_{\pi}(\langle \textbf{T}_{\langle M,s \rangle}, \textbf{AL} \rangle, \pi)$$
	then the time complexity of $\textbf{f}$: $t_{\textbf{f}}(n)$ is bounded, i.e., there exists a number $K$, for all $s \in \{0,1\}^*$, $t_{\textbf{f}}(\|s\|) < K$.
\end{lemma}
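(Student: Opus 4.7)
The plan is to bound $t_{\textbf{f}}(\|s\|)$ uniformly in $s$ by exploiting the fact that $\pi = \langle\varphi_0,\ldots,\varphi_n\rangle$ is fixed, so the adjoint checker $\textbf{CK}_{\pi}$ from Definition \ref{adjoint_checker} has a fixed description that performs exactly $n+1$ checker invocations $g_0,\ldots,g_n$ on the fixed candidate sequence $\sigma = \pi$. Because $n$ and all the formulas $\varphi_i$ are frozen before $s$ is presented, it suffices to bound the time of each invocation by a constant independent of $s$ and then sum over the fixed range $0 \le i \le n$.

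For the four checker types $\textbf{G}_{\Lambda}$, $\textbf{G}_{\text{ZFC}}$, $\textbf{G}_{\rightarrow}$, $\textbf{G}_{\forall}$, the inputs are entirely determined by the fixed $\pi$ and by fixed indices $j,k$ recorded in the proof type (Definition \ref{proof type}), so each such invocation runs in bounded time $c_i$ depending only on $\pi$. The only case that a priori depends on $s$ is $\textbf{G}_{\text{IN}}$, which invokes $\textbf{AL}(\varphi_i,\textbf{T}_{\langle M,s\rangle})$ to decide whether the fixed formula $\varphi_i$ belongs to $\textbf{T}_{\langle M,s\rangle}$. Since $\|\varphi_i\|_{as}$ is a fixed constant, the character-by-character comparison of $\varphi_i$ with any single formula of $\textbf{T}_{\langle M,s\rangle}$ terminates within $O(\|\varphi_i\|_{as})$ steps, so the non-trivial quantity to control is the \emph{number} of such comparisons.

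Here I would use the explicit layout of $\textbf{T}_{\langle M,s\rangle}$ in Definition \ref{def_tm_and_input}: its first $k+1$ entries are the formulas of $\textbf{def}_M$, which are independent of $s$, and the remaining entries of $\textbf{input}_s$ each have a rigid template $t_{0,j} = (\cdot, q_0, 0)$ (or the single universal blank formula). Because $\varphi_i$ was drawn from $\textbf{T}_{\langle M,r\rangle}$ in the original proof of $M(r)=v$, $\varphi_i$ is either a formula of $\textbf{def}_M$, in which case $\textbf{AL}$ matches it within the first $k+1$ comparisons, or a formula of the form $t_{0,j} = (b, q_0, 0)$ for a specific fixed pair $(j,b)$, whose only possible matching slot in $\textbf{T}_{\langle M,s\rangle}$ is position $k+2+j$. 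In both eventualities the search halts after a constant number of inspections bounded by a quantity depending only on $\varphi_i$. Summing the resulting bounds over $i=0,1,\ldots,n$ yields a constant $K = K(\pi, M, r)$ with $t_{\textbf{f}}(\|s\|) < K$ for every $s$.

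The main obstacle is the subcase in which the fixed $\varphi_i$ is not present in $\textbf{T}_{\langle M,s\rangle}$ at all (for example because $\|s\|$ is too small to contain the relevant $\textbf{input}$-formula, or the bit at the designated position of $s$ differs from the corresponding bit of $r$): a naive left-to-right linear scan by $\textbf{AL}$ would then spend $\Omega(\|s\|)$ steps certifying non-membership, ruining the uniform bound. To close this gap one either exploits the rigid syntactic templates of $\textbf{input}_s$ to reduce the search to an $O(1)$ inspection of the single candidate slot determined by $\varphi_i$, or treats the recursive-theory oracle $\langle \textbf{T}_{\langle M,s\rangle}, \textbf{AL}\rangle$ as contributing unit cost per membership query, in keeping with the abstract view of $\textbf{G}_{\text{IN}}$ adopted in Definition \ref{compute_checker}. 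Either route delivers the required constant $K$ independent of $s$, completing the proof.
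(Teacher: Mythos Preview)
Your argument is correct and follows essentially the same route as the paper: reduce the work of $\textbf{CK}_{\pi}$ to its $\textbf{G}_{\text{IN}}$ invocations and bound each of those by a constant depending only on $\pi$, $M$, and $r$. The paper makes one additional simplification you do not: rather than bounding the running time of the $\textbf{G}_{\Lambda}$, $\textbf{G}_{\text{ZFC}}$, $\textbf{G}_{\rightarrow}$, $\textbf{G}_{\forall}$ checks, it observes that since $\pi$ is already a valid proof from $\textbf{T}_{\langle M,r\rangle}$, each of these checks is guaranteed to return $1$ and hence need not be executed at all, so \emph{only} the $\textbf{G}_{\text{IN}}$ checkers contribute. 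For those, the paper gives exactly your bound in compressed form: because $\pi(i)\in\textbf{T}_{\langle M,r\rangle}$ and $\textbf{T}_{\langle M,r\rangle}$ has $k+l+4$ entries (with $l+1=\|r\|$), it asserts that at most the first $k+l+4$ entries of $\textbf{T}_{\langle M,s\rangle}$ need to be inspected, yielding a fixed per-checker cost $C$ and a total bound $C(n+1)$. The paper does not explicitly confront the non-membership obstacle you flag; it simply takes for granted that the search may stop after position $k+l+3$, which is precisely your ``rigid template'' resolution.
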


\begin{proof}
	From the \textbf{definition} \ref{adjoint_checker} $\textbf{CK}_{\pi}$, the adjoint checker of $\pi$, be described by a group of checkers:  $$\{g_0,g_1,g_2,\ldots,g_n\}$$
	therefore $f(s)=1$ if and only if the formula sequence $\pi$ is a normal proof of $M(s)=1$.
	Indeed there are only five types of checkers:
	
	\begin{enumerate}
		\item $\textbf{G}_{\Lambda}$ type;
		\item $\textbf{G}_{\text{ZFC}}$ type;
		\item $\textbf{G}_{\text{IN}}$ type;
		\item $\textbf{G}_{\rightarrow }$ type;
		\item $\textbf{G}_{\forall}$ type.
	\end{enumerate}
	Since the formula sequence $\pi$ is fixed $$ \pi = \langle {\varphi}_0, {\varphi}_1, {\varphi}_2, \ldots, {\varphi}_n \rangle $$ and $\langle \textbf{T}_{\langle M,r \rangle}, \textbf{AL} \rangle {\vdash}^{\pi} {\varphi}_n$, therefore from the \textbf{theorem} \ref{self_check}: $$f(r) = \textbf{CK}_{\pi}(\langle \textbf{T}_{\langle M,r \rangle}, \textbf{AL} \rangle, \pi) =1$$So only the $\textbf{G}_{\text{IN}}$ type checkers need to be computed, because:
	
	\begin{enumerate}
		\item if $g_i = \textbf{G}_{\Lambda}(sq(i))$, from the \textbf{definition} \ref{proof type}, $\pi(i)$, i.e., ${\varphi}_i$ must be in $\Lambda$, therefore $\textbf{G}_{\Lambda}(\pi(i)) = 1$,  $\textbf{CK}_{\pi}$ need not to compute the checker   $g_i = \textbf{G}_{\Lambda}(sq(i))$ on input $(\langle \textbf{T}_{\langle M,s \rangle}, \textbf{AL} \rangle, \pi)$;
		
		\item if $g_i = \textbf{G}_{\text{ZFC}}(sq(i))$,  from the \textbf{definition} \ref{proof type}, $\pi(i)$, i.e., ${\varphi}_i$ must be in ZFC, therefore $\textbf{G}_{\text{ZFC}}(\pi(i)) = 1$,  $\textbf{CK}_{\pi}$ need not to compute the checker   $g_i = \textbf{G}_{\text{ZFC}}(sq(i))$ on input $(\langle \textbf{T}_{\langle M,s \rangle}, \textbf{AL} \rangle, \pi)$;

		\item if $g_i = \textbf{G}_{\rightarrow }(sq,i,j,k)$, from the \textbf{definition} \ref{proof type}, $\pi(i)$, i.e., ${\varphi}_i$ must satisfys the following condition $ j,k < i,\ {\varphi}_j = {\varphi}_k \rightarrow {\varphi}_i$ therefore $\textbf{G}_{\rightarrow }(\pi,i,j,k)=1$, $\textbf{CK}_{\pi}$ need not to compute the checker $g_i = \textbf{G}_{\rightarrow }(sq,i,j,k)$ on input $(\langle \textbf{T}_{\langle M,s \rangle}, \textbf{AL} \rangle, \pi)$;
		
		\item if $g_i = \textbf{G}_{\forall}(sq,i,j,k)$,  from the \textbf{definition} \ref{proof type}, $\pi(i)$, i.e., ${\varphi}_i$ must satisfys the following condition $j < i,\  k \in \omega,\  ({\varphi}_i = \forall x_k {\varphi}_j) $, therefore $\textbf{G}_{\forall}(\pi,i,j,k) = 1$, $\textbf{CK}_{\pi}$ need not to compute the checker $g_i = \textbf{G}_{\forall}(sq,i,j,k)$ on input $(\langle \textbf{T}_{\langle M,s \rangle}, \textbf{AL} \rangle, \pi)$;
	\end{enumerate}
	Thus the value of  $\textbf{CK}_{\pi}(\langle \textbf{T}_{\langle M,s \rangle}, \textbf{AL} \rangle, \pi)$, i.e., $\textbf{f}(s)$ depends only on $\textbf{G}_{\text{IN}}$ type checkers. 
	
	Let  $g_i = \textbf{G}_{\text{IN}}(S, sq(i))$, then from the \textbf{definition} \ref{adjoint_checker}, $\textbf{CK}_{\pi}$ compute $$ \textbf{G}_{\text{IN}}(\langle \textbf{T}_{\langle M,s \rangle}, \textbf{AL} \rangle, \pi(i))$$ and from the \textbf{definition} \ref{compute_checker}, $\textbf{G}_{\text{IN}}$ use the algorithm $\textbf{AL}$ to decide whether ${\pi}(i) \in \textbf{T}_{\langle M,s \rangle}$. From the \textbf{definition} \ref{def_tm_and_input}, $$\textbf{T}_{\langle M,r \rangle} = \langle \textbf{def}_M \cup \textbf{input}_r \rangle $$  $$\textbf{T}_{\langle M,s \rangle} = \langle \textbf{def}_M \cup \textbf{input}_s \rangle $$
	Let $ \textbf{def}_M $ are $k$ formulas, and $r = r_0r_1r_2 \ldots r_l,\ r_i \in  \{0,1\}, 0 \le i \le l$, therefore $\textbf{T}_{\langle M,r \rangle}$ are $k+l+4$ formulas as in  \textbf{definition} \ref{def_tm_and_input}.
	
	Since $\langle \textbf{T}_{\langle M,r \rangle}, \textbf{AL} \rangle {\vdash}^{\pi} {\varphi}_n$ and $g_i$ is a $\textbf{G}_{\text{IN}}$ type checker, from the \textbf{definition} \ref{proof type}, $\pi(i) \in \textbf{T}_{\langle M,r \rangle}$.
	
	Therefore, for each $s \in \{0,1\}^*$, when $\textbf{G}_{\text{IN}}$ use the algorithm $\textbf{AL}$ to decide whether ${\pi}(i) \in \textbf{T}_{\langle M,s \rangle}$, only the first $k+l+4$ formulas of the $\textbf{T}_{\langle M,s \rangle}$ need to be tested. That is the number of steps in compute a $\textbf{G}_{\text{IN}}$ type checker is less than a fixed number, denoted by $C$, and the number of the all $\textbf{G}_{\text{IN}}$ type checkers less than $n+1$, compute all the all $\textbf{G}_{\text{IN}}$ type checkers less than $ C\times (n+1)$ steps.
	
	Hence there exists a number $K$, for all $s \in \{0,1\}^*$, the number of steps of compute $\textbf{f}(s) = \textbf{CK}_{\pi}(\langle \textbf{T}_{\langle M,s \rangle}, \textbf{AL} \rangle, \pi)$ is less than $K$, i.e., $\forall s\ s \in \{0,1\}^*\ t_{\textbf{f}}(\|s\|) < K$.
\end{proof}

In order to analyze proof procedure in more detail, we now consider the input of normal proof. Let $M$ be a Turing machine on $\{0,1\}^*$, $r = r_0r_1r_2\ldots r_l,\ r_i \in \{0,1\}, 0 \leq i \leq l$, $M(r)=1$,  and $\pi$ is a normal proof of $M(r)=1$, from the \textbf{definition} \ref{normal_proof}, we know $$\langle \textbf{T}_{\langle M,r \rangle}, \textbf{AL} \rangle {\vdash}^{\pi} \exists i t_{i,1}=(1,q_{halt},1)$$According to the \textbf{definition} \ref{def_tm_and_input} the $\langle \textbf{T}_{\langle M,r \rangle}$ is:

\begin{itemize}
	\item ${\varphi}_{0}$,
	\item ${\varphi}_{1}$,
	\item ${\varphi}_{2}$,
	\item \ldots,
	\item ${\varphi}_{k}   \text{ is } \exists m (\forall j  (j < m) \rightarrow t_{0,j} \ne (\sqcup, q_0, 0) ) \wedge (\forall j  (j \ge m) \rightarrow t_{0,j} = (\sqcup, q_0, 0) ) $,
	\item ${\varphi}_{k+1} \text{ is } \forall j  (j > l+1) \rightarrow t_{0,j} = (\sqcup, q_0, 0)$,
	\item ${\varphi}_{k+2} \text{ is } t_{0,0} = (\triangleright, q_0, 1)$,
	\item ${\varphi}_{k+3} \text{ is } t_{0,1} = (r_0, q_0, 0)$,
	\item ${\varphi}_{k+4} \text{ is } t_{0,2} = (r_1, q_0, 0)$,
	\item ${\varphi}_{k+5} \text{ is } t_{0,3} = (r_2, q_0, 0)$,
	\item \ldots,
	\item ${\varphi}_{k+l+3} \text{ is } t_{0,(l+1)} = (r_l, q_0, 0)$.
	
\end{itemize}
It is easy to see that for any $s \in \{0,1\}^*$, the first $k+1$ formulas of $ \textbf{T}_{\langle M,s \rangle}$ are the same formulas, i.e., the formula sequence $\textbf{def}_M$. If $s \ne r$ the different formulas between $ \textbf{T}_{\langle M,s \rangle}$ and $ \textbf{T}_{\langle M,r \rangle}$ are all in $$\textbf{input}_s \cup \textbf{input}_r $$Therefore we have the following definition:

\begin{defn}
	Let  $\pi = \langle {\varphi}_0, {\varphi}_1, {\varphi}_2, \ldots, {\varphi}_n \rangle$ is a normal proof of $M(r)=1$ as described above, the key information set of $\pi$ is the formula set, denoted by $\textbf{keyset}(\pi)$:$$\{\ \varphi |\  (\varphi \in \pi) \wedge (\varphi \in \textbf{input}_r)  \}$$and the key information of $\pi$, is the formula obtained by connecting all the formulas of $\textbf{keyset}(\pi)$ by $\wedge$ operations, and denoted by $\textbf{keyinfo}(\pi)$:$$\bigwedge_{\varphi \in \textbf{keyset}(\pi) } \varphi $$ 
\end{defn}

\begin{corollary}
	\label{ckeqkeyinfo}
	Let $\pi = \langle {\varphi}_0, {\varphi}_1, {\varphi}_2, \ldots, {\varphi}_n \rangle$ is a normal proof of $M(r)=1$, then $\textbf{CK}_{\pi}(\langle \textbf{T}_{\langle M,s \rangle}, \textbf{AL} \rangle, \pi) =1 $ if and only if the input $s \in \{0,1\}^*$ satisfies $\textbf{keyinfo}(\pi)$, i.e., $$\forall s ( (\textbf{CK}_{\pi}(\langle \textbf{T}_{\langle M,s \rangle}, \textbf{AL} \rangle, \pi) =1) \leftrightarrow \textbf{keyinfo}(\pi) ) $$
	
\end{corollary}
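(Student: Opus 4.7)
The plan is to unwind the definition of $\textbf{CK}_\pi$ and exploit the analysis already carried out in the proof of Lemma \ref{bounded_running_time} to isolate exactly which checkers in the adjoint check sequence can depend on the input $s$. Once that reduction is in place, both directions of the biconditional fall out by straightforward matching.

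First I would observe that $\textbf{CK}_\pi$ is described by a fixed group of checkers $\{g_0,g_1,\ldots,g_n\}$ determined by the proof type of $\langle\textbf{T}_{\langle M,r\rangle},\textbf{AL}\rangle \vdash^\pi \varphi_n$, and that of the five checker types, only $\textbf{G}_{\text{IN}}$ takes the theory pair as a genuine argument. For $g_i$ of type $\textbf{G}_{\Lambda}$, $\textbf{G}_{\text{ZFC}}$, $\textbf{G}_{\rightarrow}$, or $\textbf{G}_{\forall}$, the value on $(\langle\textbf{T}_{\langle M,s\rangle},\textbf{AL}\rangle,\pi)$ is identical to the value on $(\langle\textbf{T}_{\langle M,r\rangle},\textbf{AL}\rangle,\pi)$ because the computation depends only on $\pi$ itself; and the latter equals $1$ by Theorem \ref{self_check}. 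So $\textbf{CK}_\pi(\langle\textbf{T}_{\langle M,s\rangle},\textbf{AL}\rangle,\pi)=1$ if and only if every $\textbf{G}_{\text{IN}}$-type checker $g_i$ returns $1$, that is, if and only if $\varphi_i\in\textbf{T}_{\langle M,s\rangle}$ for every $i$ such that $g_i$ is of type $\textbf{G}_{\text{IN}}$.

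Next I would split these indices into two cases using the decomposition $\textbf{T}_{\langle M,s\rangle}=\textbf{def}_M\cup\textbf{input}_s$ from Definition \ref{def_tm_and_input}. Since $\pi$ is already a normal proof of $M(r)=1$, each such $\varphi_i$ lies in $\textbf{T}_{\langle M,r\rangle}=\textbf{def}_M\cup\textbf{input}_r$. The $\varphi_i\in\textbf{def}_M$ case is automatic: these formulas are shared by $\textbf{T}_{\langle M,s\rangle}$ for every $s$, so the checker returns $1$ regardless of $s$. The remaining $\textbf{G}_{\text{IN}}$ indices are exactly those with $\varphi_i\in\textbf{input}_r\cap\pi$, which by the definition of $\textbf{keyset}(\pi)$ is precisely $\textbf{keyset}(\pi)$.

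Finally, for each such $\varphi_i\in\textbf{keyset}(\pi)$, the formula has the specific shape $t_{0,j}=(r_{j-1},q_0,0)$ (or $t_{0,0}=(\triangleright,q_0,1)$, or the blank-padding formula), and the only member of $\textbf{input}_s$ mentioning the tape cell $t_{0,j}$ is $t_{0,j}=(s_{j-1},q_0,0)$. Thus $\varphi_i\in\textbf{T}_{\langle M,s\rangle}$ exactly when the corresponding bit of $s$ equals the corresponding bit of $r$, i.e., when $s$ satisfies $\varphi_i$ viewed as an assertion about the initial tape configuration. Taking the conjunction over all $i$ with $\varphi_i\in\textbf{keyset}(\pi)$, the condition becomes: $s$ satisfies $\textbf{keyinfo}(\pi)=\bigwedge_{\varphi\in\textbf{keyset}(\pi)}\varphi$. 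Combining this with the earlier reduction yields the biconditional. The only delicate point I foresee is being careful with the padding formula $\varphi_{k+1}$, which is universal and hence identical in $\textbf{input}_r$ and $\textbf{input}_s$ only when $\|r\|=\|s\|$; but since this formula is literally the same string in both cases (it quantifies uniformly over $j>l+1$ where $l$ is determined by $\pi$ through the formula itself, not by $s$), it behaves like a member of $\textbf{def}_M$ in the relevant sense and offers no real obstruction.
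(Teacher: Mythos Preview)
Your proposal is correct and follows essentially the same route as the paper: invoke the analysis of Lemma~\ref{bounded_running_time} to reduce $\textbf{CK}_\pi$ to its $\textbf{G}_{\text{IN}}$-type checkers, then use the decomposition $\textbf{T}_{\langle M,s\rangle}=\textbf{def}_M\cup\textbf{input}_s$ to conclude that only the formulas in $\textbf{keyset}(\pi)$ can distinguish inputs. The paper's own proof is a two-sentence reference to exactly these ingredients, whereas you have spelled out the case split and the matching of input bits explicitly; your version is a strict elaboration of theirs rather than a different argument.
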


\begin{proof}
	From the proof of \textbf{lemma} \ref{bounded_running_time}, we know that in order to decide whether or not $\textbf{CK}_{\pi}(\langle \textbf{T}_{\langle M,s \rangle}, \textbf{AL} \rangle, \pi) =1 $, only the $\textbf{G}_{\text{IN}}$ type checkers need to be computed, and from the above discussion, indeed, only the checkers corresponding to the formulas in $\textbf{keyset}(\pi)$ need to be computed, therefore the lemma is proved.
\end{proof}

\begin{corollary}
	\label{keyinfotoms1}
	Let $\pi = \langle {\varphi}_0, {\varphi}_1, {\varphi}_2, \ldots, {\varphi}_n \rangle$ is a normal proof of $M(r)=1$, if an input $s \in \{0,1\}^*$ satisfies $\textbf{keyinfo}(\pi)$ then $M(s)=1$ can be proved in ZFC, i.e., the following formula can be proved in ZFC: $$\forall s (\textbf{keyinfo}(\pi) \rightarrow (M(s)=1) )$$
	
\end{corollary}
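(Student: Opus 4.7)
The plan is to chain Corollary \ref{ckeqkeyinfo} with the definitions of $\textbf{CK}_\pi$ (Definition \ref{adjoint_checker}) and of a normal proof (Definition \ref{normal_proof}), and then internalize the resulting argument inside ZFC.

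First, I would fix an arbitrary $s \in \{0,1\}^*$ satisfying $\textbf{keyinfo}(\pi)$. By Corollary \ref{ckeqkeyinfo} this forces $\textbf{CK}_\pi(\langle \textbf{T}_{\langle M,s\rangle}, \textbf{AL}\rangle, \pi)=1$. By the output clause at the end of Definition \ref{adjoint_checker}, returning $1$ is equivalent to the equality of proof types
\[
\textbf{prooftype}(\langle \textbf{T}_{\langle M,s\rangle}, \textbf{AL}\rangle, \pi) = \textbf{prooftype}(\langle \textbf{T}_{\langle M,r\rangle}, \textbf{AL}\rangle, \pi).
\]
Since the right-hand side is the (nonempty) proof type of the original deduction $\langle \textbf{T}_{\langle M,r\rangle}, \textbf{AL}\rangle \vdash^\pi \exists i\, t_{i,1}=(1,q_{halt},1)$, the same formula sequence $\pi$ is a valid deduction $\langle \textbf{T}_{\langle M,s\rangle}, \textbf{AL}\rangle \vdash^\pi \exists i\, t_{i,1}=(1,q_{halt},1)$, which by Definition \ref{normal_proof} is precisely a normal proof of $M(s)=1$ in ZFC.

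Second, to upgrade this pointwise conclusion into the universally quantified ZFC-theorem $\forall s(\textbf{keyinfo}(\pi) \rightarrow M(s)=1)$, I would observe that the argument above is purely combinatorial on the fixed sequence $\pi$: the only checkers that depend on the parameter $s$ are the $\textbf{G}_{\text{IN}}$-type ones, and by the proof of Corollary \ref{ckeqkeyinfo} they depend only on the conjuncts of $\textbf{keyinfo}(\pi)$. Encoding $\textbf{CK}_\pi$ and the ``is-a-normal-proof'' relation inside ZFC is routine, so the whole chain of implications above becomes a ZFC-derivation whose only open hypothesis is $\textbf{keyinfo}(\pi)$; one then applies the Deduction Theorem and universally closes over $s$ to obtain the stated formula.

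The step I expect to be the main obstacle is the syntactic bookkeeping that identifies the literal formulas from $\textbf{input}_r$ occurring inside $\pi$ (they mention concrete bits $r_0,\ldots,r_l$) with the corresponding formulas inside $\textbf{input}_s$ under the hypothesis $\textbf{keyinfo}(\pi)$: one must verify carefully that $\textbf{keyinfo}(\pi)$ pins down exactly those bits of $s$ at exactly those positions which $\pi$ actually uses, so that the $\textbf{G}_{\text{IN}}$-checks that succeeded for $r$ succeed verbatim for $s$. Once that matching is made precise, Corollary \ref{ckeqkeyinfo} discharges the remainder of the argument and the statement follows.
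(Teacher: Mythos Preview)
Your proposal is correct and follows essentially the same approach as the paper: the paper's own proof is the single line ``It is obviously true from the above discussion,'' where ``the above discussion'' is precisely the chain Corollary~\ref{ckeqkeyinfo} $\Rightarrow$ equality of proof types $\Rightarrow$ $\pi$ is a normal proof of $M(s)=1$ that you spell out. Your caveat about matching the $\textbf{input}_r$-formulas used in $\pi$ against $\textbf{input}_s$ is exactly the content the paper sweeps under ``obvious,'' and your handling of it via Corollary~\ref{ckeqkeyinfo} is the intended one.
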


\begin{proof}
	It is obviously true from the above discussion.
\end{proof}

\begin{theorem}
	\label{ck1toms1}
	Let $\pi = \langle {\varphi}_0, {\varphi}_1, {\varphi}_2, \ldots, {\varphi}_n \rangle$ is a normal proof of $M(r)=1$, then the following formula can be proved in ZFC:
	$$\forall s ( (\textbf{CK}_{\pi}(\langle \textbf{T}_{\langle M,s \rangle}, \textbf{AL} \rangle, \pi) =1 ) \rightarrow ( M(s)=1) )$$
\end{theorem}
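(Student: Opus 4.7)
The plan is to chain together the two corollaries that were just established, which essentially isolate the two halves of the biconditional we need. Corollary \ref{ckeqkeyinfo} provides the bridge from the semantic condition ``$\textbf{CK}_{\pi}$ returns $1$ on $\langle \textbf{T}_{\langle M,s \rangle}, \textbf{AL} \rangle$'' to the syntactic condition ``$s$ satisfies $\textbf{keyinfo}(\pi)$'', and Corollary \ref{keyinfotoms1} carries that syntactic condition forward to a ZFC-provable statement that $M(s) = 1$. So the target implication is obtained by composition.

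First I would fix an arbitrary $s \in \{0,1\}^*$ and assume $\textbf{CK}_{\pi}(\langle \textbf{T}_{\langle M,s \rangle}, \textbf{AL} \rangle, \pi) = 1$. By the right-to-left (or rather, left-to-right) direction of Corollary \ref{ckeqkeyinfo}, this equality forces $s$ to satisfy the formula $\textbf{keyinfo}(\pi)$, because the only checkers in the adjoint sequence whose outcome can depend on $s$ are the $\textbf{G}_{\text{IN}}$ type checkers attached to the formulas in $\textbf{keyset}(\pi) \subseteq \textbf{input}_r$, and these evaluate to $1$ on $\langle \textbf{T}_{\langle M,s \rangle}, \textbf{AL} \rangle$ precisely when the corresponding input bits of $s$ agree with those of $r$, which is the content of $\textbf{keyinfo}(\pi)$.

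Next, with $\textbf{keyinfo}(\pi)$ verified for $s$, I would invoke Corollary \ref{keyinfotoms1}, which gives the ZFC-provable implication $\textbf{keyinfo}(\pi) \rightarrow (M(s) = 1)$. Applying modus ponens inside ZFC yields $M(s) = 1$. Since $s$ was arbitrary, universal generalization closes the argument and produces the desired $\forall s$ statement inside ZFC.

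The main thing to be careful about is not a deep obstacle but a bookkeeping one: one has to verify that the whole chain, including the appeal to Corollary \ref{ckeqkeyinfo}, is itself formalizable in ZFC and not merely true in the metatheory. This follows because the adjoint checker $\textbf{CK}_{\pi}$, the sequences $\textbf{T}_{\langle M,s \rangle}$, the algorithm $\textbf{AL}$, and the predicate $\textbf{keyinfo}(\pi)$ are all recursive objects whose defining clauses are $\Delta_1$ in ZFC, so the biconditional of Corollary \ref{ckeqkeyinfo} and the implication of Corollary \ref{keyinfotoms1} are both formal theorems of ZFC; combining them inside ZFC therefore yields the theorem as stated.
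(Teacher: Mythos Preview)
Your proposal is correct and follows essentially the same approach as the paper: the paper's proof is a one-line remark that the theorem is ``obviously deduced from Corollary~\ref{ckeqkeyinfo} and Corollary~\ref{keyinfotoms1}'', and your argument is precisely the natural unpacking of that deduction (the left-to-right direction of Corollary~\ref{ckeqkeyinfo} followed by modus ponens with Corollary~\ref{keyinfotoms1} and universal generalization). Your added remark about the $\Delta_1$ nature of the objects to ensure the chain is internal to ZFC is a reasonable extra care not spelled out in the paper, but the core strategy is identical.
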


\begin{proof}
	This theorem is obviously deduced from the \textbf{corollary} \ref{ckeqkeyinfo} and \textbf{corollary} \ref{keyinfotoms1}.
\end{proof}

\begin{defn}
	\label{CK_C}
	Let $ M $ is a Turing machine $(Q,\Gamma,\delta,q_0,q_{halt})$, $v_i \in \{0,1\}, r_i \in \{0,1\}^*, i = 0,1,\cdots, n. $, and \textbf{AL} is the algorithm as the \textbf{definition} \ref{searching_al}, $ M(r_i) = v_i$, ${\pi}_i$ is a formula sequence:$$  \langle {\varphi}_{i_0}, {\varphi}_{i_1}, {\varphi}_{i_2}, \ldots, {\varphi}_{i_{k_i}} \rangle $$ such that: $${\varphi}_{i_{k_i}} \text{ is the formula: } \exists j t_{j,1}=(v_i,q_{halt},1) \text{ and } \langle \textbf{T}_{\langle M,r_i \rangle}, \textbf{AL} \rangle {\vdash}^{{\pi}_i} {\varphi}_{i_{k_i}}$$
	That is ${\pi}_i$ is a normal proof of $ M(r_i) = v_i$, the corresponding adjoint checker of ${\pi}_i$ is $\textbf{CK}_{{\pi}_i}$, and let $$C = \{\textbf{CK}_{{\pi}_0},\textbf{CK}_{{\pi}_2},\cdots,\textbf{CK}_{{\pi}_n} \}$$thus we can define a Turing machine $\textbf{F}_{C}$ on $s \in \{0,1\}^*$ as:\\
	$\textbf{F}_{C}$ = ``On input $s$, where $s \in \{0,1\}^*$:
	
	\begin{enumerate}
		\item for each $i,\ 0 \le i \le n $ use $\textbf{CK}_{\pi_i}$ to compute $$\textbf{CK}_{\pi_i}(\langle \textbf{T}_{\langle M,s \rangle}, \textbf{AL} \rangle, \pi_i)$$
		\item If there is a checker return 1, i.e., there is a normal proof $\pi_k$,  $\textbf{CK}_{\pi_k}(\langle \textbf{T}_{\langle M,s \rangle}, \textbf{AL} \rangle, \pi_k) = 1$, the machine $\textbf{F}_{C}$ return 1, and halts.
		\item If all the computations of checkes return 0, i.e., $$\textbf{CK}_{\pi_i}(\langle \textbf{T}_{\langle M,s \rangle}, \textbf{AL} \rangle, \pi_i) = 0, \text{ for all }0 \le i \le n $$ the machine $\textbf{F}_{C}$ return 0, and halts.''
	\end{enumerate}
	We call $\textbf{F}_{C}$ the $C$ generated verifier.
\end{defn}

\begin{corollary}
	\label{boundtimeofCKC}
	Let the Turing machine $\textbf{F}_{C}$ is the $C$ generated verifier as the \textbf{definition} \ref{CK_C}, then the time complexity of $\textbf{F}_{C}$: $t_{\textbf{F}_{C}}(n)$ is bounded, i.e., there exists a number $K$, for all $s \in \{0,1\}^*$, $t_{\textbf{F}_{C}}(\|s\|) < K$.
\end{corollary}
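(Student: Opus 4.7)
The plan is to reduce this to a finite sum of applications of Lemma \ref{bounded_running_time}. Observe first that $\textbf{F}_{C}$ is built from the fixed finite list of checkers $\textbf{CK}_{\pi_0},\textbf{CK}_{\pi_1},\ldots,\textbf{CK}_{\pi_n}$ together with a simple dispatch-and-OR control structure that does not depend on the input $s$ at all; only the argument $\langle \textbf{T}_{\langle M,s\rangle},\textbf{AL}\rangle$ varies with $s$. So a bound on the total running time will be a sum of bounds for each subroutine call plus a constant of bookkeeping.

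For the subroutine bounds, I would apply Lemma \ref{bounded_running_time} once to each $\pi_i$. Since $\pi_i$ is, by the hypotheses of Definition \ref{CK_C}, a normal proof of $M(r_i)=v_i$ satisfying exactly the premise of that lemma (with $r_i$ in place of $r$), the lemma yields a constant $K_i$ such that, for every $s \in \{0,1\}^*$, the computation of $\textbf{CK}_{\pi_i}(\langle\textbf{T}_{\langle M,s\rangle},\textbf{AL}\rangle,\pi_i)$ halts in fewer than $K_i$ steps. The argument there went through regardless of whether $v_i=1$ or $v_i=0$, because only the $\textbf{G}_{\text{IN}}$ type checkers do any input-dependent work, and their cost is governed by $\textbf{AL}$ scanning the fixed-length prefix $\textbf{def}_M$ plus the portion of $\textbf{input}_s$ indexed by $\pi_i$, whose index set is determined by $\pi_i$ alone.

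Setting $K := \bigl(\sum_{i=0}^{n} K_i\bigr) + D$, where $D$ is the constant overhead of cycling through the $n+1$ calls and performing the final OR/halt decision described in steps $1$--$3$ of Definition \ref{CK_C}, I then conclude $t_{\textbf{F}_C}(\|s\|) < K$ for all $s$. Both $n$ and each $K_i$ are fixed constants (the set $C$ is fixed once and for all), so $K$ is a genuine constant independent of $s$.

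There is no real obstacle here; the only thing to be slightly careful about is that the control-flow overhead $D$ truly is $s$-independent, but this is immediate because the loop bound $n$, the descriptions of the $\textbf{CK}_{\pi_i}$, and the final return logic are all baked into the description of $\textbf{F}_C$ before $s$ is read. Thus the corollary falls out as an immediate consequence of Lemma \ref{bounded_running_time} applied $n+1$ times.
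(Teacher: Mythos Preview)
Your proposal is correct and follows essentially the same approach as the paper: apply Lemma~\ref{bounded_running_time} to each $\textbf{CK}_{\pi_i}$ to obtain individual bounds $K_i$, then sum them and add a constant for the control overhead. The paper's proof is the one-line version of exactly this argument, writing the bound as $\sum_{i=0}^{n} K_i + M$ for a large enough constant $M$.
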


\begin{proof}
	From the \textbf{lemma} \ref{bounded_running_time}, we know that the time complexity of each  $$\textbf{CK}_{\pi_i}(\langle \textbf{T}_{\langle M,s \rangle}, \textbf{AL} \rangle, \pi_i)$$is bounded by a number $K_i$, hence the computation steps of $\textbf{F}_{C}$ is no more than $\sum_{i=0}^{n} K_i + M$ where $M$ is a large enough constant number.
\end{proof}

\begin{lemma}
	\label{fc1eqvckp}
	Let the Turing machine $\textbf{F}_{C}$ is the $C$ generated verifier as the \textbf{definition} \ref{CK_C}, then the following formula can be proved in ZFC:$$(\textbf{F}_{C}(s)=1) \leftrightarrow \biggl( \bigvee_{\textbf{CK}_{\pi_i} \in C} ( \textbf{CK}_{\pi_i}(\langle \textbf{T}_{\langle M,s \rangle}, \textbf{AL} \rangle, \pi_i)=1) \biggr) $$
\end{lemma}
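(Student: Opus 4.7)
The plan is to unfold the definition of the $C$ generated verifier $\textbf{F}_{C}$ and show that its output is, by construction, exactly the disjunction on the right-hand side. The statement is essentially a semantic restatement of the algorithmic description of $\textbf{F}_{C}$ given in \textbf{Definition} \ref{CK_C}, so the proof is a direct verification in both directions inside ZFC.

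First I would prove the forward direction. Assume $\textbf{F}_{C}(s)=1$. By inspecting the three steps in the definition of $\textbf{F}_{C}$, the only way the machine can return $1$ is through step 2, which fires precisely when some iteration of the loop in step 1 has produced a checker $\textbf{CK}_{\pi_k} \in C$ with $\textbf{CK}_{\pi_k}(\langle \textbf{T}_{\langle M,s \rangle}, \textbf{AL} \rangle, \pi_k)=1$. The existence of such an index $k$ is exactly the disjunction $\bigvee_{\textbf{CK}_{\pi_i} \in C} (\textbf{CK}_{\pi_i}(\langle \textbf{T}_{\langle M,s \rangle}, \textbf{AL} \rangle, \pi_i)=1)$. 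Since $C$ is a finite set (with $n+1$ members), this disjunction is a genuine finite formula of ZFC, so the inference is formalizable.

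For the backward direction, assume that the disjunction holds, so fix some $k$ with $0 \le k \le n$ such that $\textbf{CK}_{\pi_k}(\langle \textbf{T}_{\langle M,s \rangle}, \textbf{AL} \rangle, \pi_k)=1$. By \textbf{Lemma} \ref{bounded_running_time}, each individual checker $\textbf{CK}_{\pi_i}$ halts in a bounded number of steps on any input, so when $\textbf{F}_{C}$ executes step 1 it reaches the iteration indexed by $k$ in finite time, at which point step 2 fires and $\textbf{F}_{C}$ returns $1$ and halts. Thus $\textbf{F}_{C}(s)=1$, completing the equivalence.

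The only mildly subtle point, rather than an obstacle, is that we must be sure this equivalence is actually \emph{provable in ZFC} and not merely true in the standard model. This is handled by noting that the set $C$ is finite and externally fixed, and the behavior of $\textbf{F}_{C}$ is captured by a finite case analysis over the $n+1$ possible values of the loop index; ZFC easily proves by induction (or by direct expansion of the finite for-loop) that a machine which sequentially tests finitely many bounded subroutines and returns $1$ on the first success is semantically equivalent to the disjunction of their outputs. So the lemma reduces to this routine meta-observation together with \textbf{Lemma} \ref{bounded_running_time} to guarantee that each sub-computation terminates.
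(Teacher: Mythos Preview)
Your proposal is correct and follows essentially the same approach as the paper, which simply says ``From the definition of $\textbf{F}_{C}$, it is obviously true.'' Your argument is just a more detailed unfolding of that same observation; the invocation of \textbf{Lemma}~\ref{bounded_running_time} for termination is a reasonable extra precaution but is not strictly needed, since each $\textbf{CK}_{\pi_i}$ is already a total algorithm by \textbf{Definition}~\ref{adjoint_checker}.
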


\begin{proof}
	From the definition of $\textbf{F}_{C}$, it is obviously true.
\end{proof}

\begin{lemma}
	\label{reftocth}
	There exists a Turing machine $M$ that it halts on every input and the following five formulas can be proved in ZFC:
	
	\begin{enumerate}
		\item $\forall r,s \in \{0,1\}^*\   (\ \|s\| = \|r\| \rightarrow (  M(s) = M(r) )\ )$.
		\item $\forall r,s \in \{0,1\}^*\   (\ \|s\|> \|r\|) \rightarrow ( M(s)=1 \rightarrow  M(r)=1  )$.
		\item $\forall r,s \in \{0,1\}^*\   (\ \|s\|> \|r\|) \rightarrow ( M(r)=0 \rightarrow  M(s)=0  )$.
		\item $\forall s \in \{0,1\}^*\   (\  M(s)=0 \vee  M(s)=1  )$.
		\item $(\forall s (s \in \{0,1\}^*) \rightarrow ( M(s) = 1) ) \vee (\exists m(\|s\| < m \rightarrow M(s)=1 ) \wedge (\|s\| \ge m \rightarrow M(s)=0) )$.
	\end{enumerate}
	but the formula $ \forall s  ( M(s) = 1) $ is independent of ZFC, i.e., it cannot be proved in ZFC and its negation is also unprovable in ZFC.
\end{lemma}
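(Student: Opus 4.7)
The plan is to take $M$ to be the bounded consistency-checker for ZFC: on input $s$ with $\|s\|=n$, the machine enumerates all formal ZFC-proofs whose ASCII length is at most $n$, and outputs $0$ if any of them is a derivation of $0=1$ and $1$ otherwise. This is a total recursive procedure, because the set of proof candidates of length at most $n$ is finite and proof-checking is decidable, so $M$ halts on every input. Note that $M$ built this way depends on its input only through $\|s\|$, which will instantly give clause (1).

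I would next verify, clause by clause, that assertions (1)--(5) are provable inside ZFC after the usual arithmetization of syntax. Clauses (2) and (3) are the two directions of the obvious monotonicity: enlarging the search bound $n$ can only turn output $1$ into output $0$ and never the reverse, so once a short derivation of $0=1$ has been witnessed it persists on all longer inputs, while the presence of output $1$ on a long input forces output $1$ on every shorter one. Clause (4) is immediate from the construction. Clause (5) formalizes the dichotomy ``the recursive set $A=\{n:\exists\text{ proof of }0=1\text{ of length }\le n\}$ is empty, or else has a least element'' and expresses the two cases by the sentences $\forall s\,M(s)=1$ and $\exists m(\|s\|<m\rightarrow M(s)=1)\wedge(\|s\|\ge m\rightarrow M(s)=0)$; this is proved in ZFC from the well-ordering of $\omega$.

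The heart of the argument is the independence of $\forall s\, M(s)=1$. Via the same arithmetization, this sentence is provably equivalent in ZFC to $\mathrm{Con}(\mathrm{ZFC})$, since it asserts precisely that no ZFC-derivation of $0=1$ exists. By G\"odel's second incompleteness theorem, if ZFC is consistent then $\mathrm{Con}(\mathrm{ZFC})$ is not provable in ZFC; and under the standard $\Sigma_1$-soundness of ZFC (i.e. ZFC proves no false $\Sigma_1$-sentence), its negation is not provable either, because $\neg\mathrm{Con}(\mathrm{ZFC})$ is a $\Sigma_1$-sentence asserting the existence of a concrete proof of $0=1$, which under our hypothesis does not exist. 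Hence $\forall s\, M(s)=1$ is independent of ZFC.

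The step I expect to demand the most care is not the independence itself, which is textbook, but the in-ZFC verification of clause (5): one must avoid doing the case split meta-theoretically and instead derive the disjunction as a formal theorem, using $\Delta_0$-comprehension to capture $A$ and the well-ordering of $\omega$ to furnish the threshold $m$ when $A$ is non-empty. The other four clauses reduce to routine induction on $n$ over the recursive predicate ``there exists a proof of $0=1$ of length $\le n$'', and the rest of the construction is bookkeeping of the ASCII encoding introduced in Definition~\ref{defn_string_len}.
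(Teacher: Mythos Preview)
Your construction is correct and is the canonical one. The paper itself does not prove this lemma in-document: its entire proof is the sentence ``This lemma is just the Corollary~3.2 in the paper~\cite{cth},'' deferring everything to an external reference by the same author. You have supplied what the paper omits, namely the explicit machine (the bounded consistency-checker for ZFC) together with the verification of clauses (1)--(5) and the reduction of the independence claim to G\"odel's second incompleteness theorem via the equivalence $\forall s\,M(s)=1 \leftrightarrow \mathrm{Con}(\mathrm{ZFC})$.

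Two remarks worth making explicit. First, your argument for the unprovability of $\neg\forall s\,M(s)=1$ invokes $\Sigma_1$-soundness of ZFC rather than mere consistency; this is the right hypothesis (mere consistency does not rule out ZFC proving $\neg\mathrm{Con}(\mathrm{ZFC})$), and you state it clearly, but note that the paper's statement of the lemma suppresses any such background assumption. Second, your observation that clause~(5) must be derived as a single formal disjunction inside ZFC, not obtained by a metatheoretic case split, is exactly the point that needs care, and your appeal to the well-ordering of $\omega$ (equivalently, least-number principle applied to the decidable set of lengths witnessing a contradiction) is the correct internal argument. With those two points in hand your proof is complete and almost certainly matches what the cited reference does.
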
 

\begin{proof}
	This lemma is just the \textbf{Corollary 3.2} in the paper \cite{cth}.
\end{proof}

\begin{theorem}
	\label{keytheorem}
	Let $M$ as in the \textbf{lemma} \ref{reftocth}, i.e., $ \forall s  ( M(s) = 1) $ is independent of ZFC, then $\textbf{FS}(M,s,1)$ is unbounded on $s \in \{0,1\}^*$, that is
	$$\forall m \exists s \in \{0,1\}^*\ (\textbf{FS}(M,s,1) > m )$$
\end{theorem}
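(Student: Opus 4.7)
The plan is to argue by contradiction: assume there is $m \in \omega$ such that $\textbf{FS}(M,s,1) \le m$ for every $s \in \{0,1\}^*$, and then derive a ZFC-proof of $\forall s\,(M(s)=1)$, contradicting \textbf{Lemma} \ref{reftocth}.

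First I would collect the finitely many normal proofs whose ASCII length is at most $m$. Since each proof is a finite formula sequence encoded in ASCII of bounded length, only finitely many such sequences exist. Let $\Pi' = \{\pi_1,\dots,\pi_n\}$ enumerate those that are normal proofs of $M(r)=1$ for some $r \in \{0,1\}^*$, and set $C = \{\textbf{CK}_{\pi_1},\dots,\textbf{CK}_{\pi_n}\}$. Form the $C$-generated verifier $\textbf{F}_C$ of \textbf{Definition} \ref{CK_C}. Because $\textbf{FS}(M,s,1) \le m$ for every $s$, each input $s$ admits some $\pi_i \in \Pi'$ that is a normal proof of $M(s)=1$; by \textbf{Theorem} \ref{self_check} the associated checker returns $1$, and by \textbf{Lemma} \ref{fc1eqvckp} it follows that $\textbf{F}_C(s)=1$ for every $s \in \{0,1\}^*$.

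Next I would invoke \textbf{Corollary} \ref{boundtimeofCKC} to obtain a uniform constant $K$ with $t_{\textbf{F}_C}(\|s\|) < K$ for all $s$; enlarging $K$ if necessary makes $\|s\| \ge K$ imply $t_{\textbf{F}_C}(\|s\|) < \|s\|$, which is precisely the running-time hypothesis of \textbf{Theorem} \ref{short_time_tm}. Since the preceding step gives $\textbf{F}_C(s)=1$ for each of the finitely many $s$ with $\|s\| \le K$ (each such equation is a finite computation, hence ZFC-verifiable), \textbf{Theorem} \ref{short_time_tm} yields a ZFC-proof of $\forall s\,\textbf{F}_C(s)=1$. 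Finally, \textbf{Lemma} \ref{fc1eqvckp} and \textbf{Theorem} \ref{ck1toms1} together provide, inside ZFC, the implication $\textbf{F}_C(s)=1 \to M(s)=1$, so ZFC proves $\forall s\,M(s)=1$, the desired contradiction.

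The main obstacle is conceptual rather than computational: one must verify that the metamathematical hypothesis ``$\textbf{FS}(M,s,1) \le m$ for every $s$'' actually yields a single fixed Turing machine $\textbf{F}_C$ whose description ZFC can handle uniformly. The length bound $m$ is what concentrates the a priori infinite family of potential proof-witnesses into a finite pool $\Pi'$, and this finiteness, together with the bounded-runtime \textbf{Corollary} \ref{boundtimeofCKC}, is exactly what unlocks \textbf{Theorem} \ref{short_time_tm} and converts a pointwise truth into one provable inside ZFC. The intuition reinforced by this argument matches \textbf{Observation} \ref{obsv2}: if only finitely many proof types sufficed to witness $M(s)=1$ across all $s$, then the whole $\forall s$-statement would collapse into a ZFC theorem, which we have assumed it does not.
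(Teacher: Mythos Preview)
Your proposal is correct and follows essentially the same route as the paper: assume a bound $m$ on $\textbf{FS}(M,s,1)$, collect the finitely many normal proofs of length at most $m$ into a set, form the $C$-generated verifier $\textbf{F}_C$, use \textbf{Corollary}~\ref{boundtimeofCKC} to bound its running time, apply \textbf{Theorem}~\ref{short_time_tm} to obtain a ZFC-proof of $\forall s\,\textbf{F}_C(s)=1$, and then combine \textbf{Lemma}~\ref{fc1eqvckp} with \textbf{Theorem}~\ref{ck1toms1} to reach the contradiction. The only cosmetic difference is that the paper first records explicitly why no $s$ with $M(s)=0$ can exist (otherwise the negation would be ZFC-provable), whereas you leave this implicit in the assumption that $\textbf{FS}(M,s,1)$ is defined for every $s$.
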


\textbf{PROOF IDEA}\ \ \   From the \textbf{lemma} \ref{reftocth}, $ \forall s  ( M(s) = 1) $ is independent of ZFC. Therefore we cannot find a string $  s \in \{0,1\}^* $ satisfying $ M(s) = 0  $, that is, for all $  s \in \{0,1\}^* $, we use $M$ to compute on $s$ will returning 1, but we cannot prove $ \forall s  ( M(s) = 1) $  in ZFC.

If $\textbf{FS}(M,s,1)$ is bounded on $s \in \{0,1\}^*$, then there exists a number $n$ for all $s \in \{0,1\}^*$,
$$\textbf{FS}(M,s,1) < n $$

Let  $$S_n = \{ \pi|\  \pi\ \text{is normal proof of }M(s) = 1,\ \ s \in \{0,1\}^*, \ \| \pi \|_{as}<n, \} $$
It is not hard to see that $S_n$ is finite.

Since $\textbf{FS}(M,s,1) < n $, for each $ s \in \{0,1\}^*$, there exists a normal proof sequence $\pi_s$ of $ M(s) = 1$ and $\|\pi_s\|_{as} < n$, $$ \langle \textbf{T}_{\langle M,s \rangle}, \textbf{AL} \rangle {\vdash}^{\pi_s} (\exists i t_{i,1}=(1,q_{halt},1))$$    therefore $\pi_s\in S_n$. Since $S_n$ is finite, thus we can prove $$  \forall s  ( M(s) = 1) $$ in finite steps in ZFC, contradiction.

\begin{proof}
		Let $M$ as in the \textbf{lemma} \ref{reftocth},if we found a string $s \in \{0,1\}^*,\  M(s) = 0$, then it is obvious that we can prove  $\neg( \forall s  ( M(s) = 1) ) $ in ZFC. But from the \textbf{lemma} \ref{reftocth}:
		
		\begin{equation}
		\label{e_001}
		\text{The statement }\forall s  ( M(s) = 1)\text{ is independent of ZFC.}
		\end{equation}
		
		Therefore we  cannot find such string, i.e., for all $  s \in \{0,1\}^* $, we use $M$ to compute on $s$ will returning 1, but we cannot prove $ \forall s  ( M(s) = 1) $  in ZFC.\\
		
		Now we prove the statement $\forall m \exists s \in \{0,1\}^*\ (\textbf{FS}(M,s,1) > m )$ by contradiction.\\
		
		First, we assume for the purpose of later obtaining a contradiction that $\textbf{FS}(M,s,1)$ is bounded on $s \in \{0,1\}^*$. Thus there exists a number $n$ for all $s \in \{0,1\}^*$,
		
		\begin{equation}
		\label{f_len}
			\textbf{FS}(M,s,1) < n.
		\end{equation}
		

		Let  $\phi$ is the formula $\exists i t_{i,1}=(1,q_{halt},1)$ then define the $S_n$ as:  $$S_n = \{ \pi|\ \langle \textbf{T}_{\langle M,s \rangle}, \textbf{AL} \rangle {\vdash}^{\pi} \phi,\ s \in \{0,1\}^*,\ \| \pi \|_{as}<n, \} $$
		
		It is not hard to see that $S_n$ is finite, therefore let $$S_n = \{ \pi_0, \pi_1, \pi_2,\ldots,\pi_m \}$$ and the corresponding adjoint checkers set $C_n = \{ c_{\pi}|\ c_{\pi} = \textbf{CK}_{\pi},\ \pi \in S_n \}$ is also finite, i.e., $$C_n = \{ \textbf{CK}_{\pi_0},\textbf{CK}_{\pi_1},\textbf{CK}_{\pi_2},\ldots,\textbf{CK}_{\pi_m} \}$$
		Let $\textbf{F}_{C_n}$ is the $C_n$ generated verifier(see the definition in \textbf{definition} \ref{CK_C}).
		
		From (\ref{f_len}), for each $ s \in \{0,1\}^*\ \textbf{FS}(M,s,1) < n $, therefore there exists a normal proof sequence $\pi_s$, $\|\pi_s\|_{as} < n$, $$ \langle \textbf{T}_{\langle M,s \rangle}, \textbf{AL} \rangle {\vdash}^{\pi_s} \phi$$
		Therefore $\pi_s \in S_n \text{ and the corresponding adjoint checker } \textbf{CK}_{\pi_s} \in C_n$ and from the \textbf{theorem} \ref{self_check}, we get $$\textbf{CK}_{\pi_s}(\langle \textbf{T}_{\langle M,s \rangle}, \textbf{AL} \rangle, \pi_s)=1.$$
		So when we compute the machine $\textbf{F}_{C_n}$ on any $ s \in \{0,1\}^*$, it will return 1.\\
		
		Because the \textbf{corollary} \ref{boundtimeofCKC}, the computation steps on $\textbf{F}_{C_n}$ is bounded, so there exists a number $K$, the computation complexity of $\textbf{F}_{C_n}$ is bounded by $K$: $$\forall s \in \{0,1\}^*\ t_{\textbf{F}_{C_n}}(\|s\|) < K$$and obviously, there are finite strings in $\{s|\ \|s\| \le K \}$, so we can prove the following formula  in ZFC: $$\|s\| \le K \rightarrow \textbf{F}_{C_n}(s) = 1$$ 
		
		Therefore we can prove the following two statements: $$\forall s \in \{0,1\}^*\  \|s\|\ge K \rightarrow t_{\textbf{F}_{C_n}}(\|s\|) < \|s\|$$and $$\|s\| \le K \rightarrow \textbf{F}_{C_n}(s) = 1$$Since the \textbf{theorem} \ref{short_time_tm}, we can prove $\forall s \textbf{F}_{C_n}(s)=1$ in ZFC, and from the \textbf{lemma} \ref{fc1eqvckp}, we can prove $$\forall s\biggl( \bigvee_{i=0}^{m} ( \textbf{CK}_{\pi_i}(\langle \textbf{T}_{\langle M,s \rangle}, \textbf{AL} \rangle, \pi_i)=1) \biggr) $$and since the \textbf{theorem} \ref{ck1toms1} we can prove $\forall s  ( M(s) = 1$ in ZFC, contradicting the statemenet of (\ref{e_001}): $\forall s  ( M(s) = 1)$ is independent of ZFC.

\end{proof}

Indeed, the \textbf{theorem} \ref{keytheorem} is actually the rigorous expression of ``there are essentially infinite different independent reasons govern the whole domain to serve the unprovable true statement''. From this theorem we get the following corollary:

\begin{corollary}
	The formula $ \forall s  ( M(s) = 1) $ is provable in ZFC, if and only if $\textbf{FS}(M,s,1)$ is bounded on $s \in \{0,1\}^*$, that is $$\exists m \forall s \in \{0,1\}^*\ (\textbf{FS}(M,s,1) < m )$$
\end{corollary}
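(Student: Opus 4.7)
The plan is to prove the two directions separately. For the $(\Leftarrow)$ direction (bounded $\textbf{FS}$ implies provability), the argument already lives inside the proof of Theorem \ref{keytheorem}: that proof derives its contradiction precisely by exhibiting, from the boundedness hypothesis, an explicit ZFC proof of $\forall s(M(s)=1)$. I would simply lift that middle segment out and present it as a standalone implication. Concretely, assume $\textbf{FS}(M,s,1) < m$ for every $s$; collect the finite set $S_m$ of normal proofs of ASCII length shorter than $m$, form the adjoint checkers $C_m = \{\textbf{CK}_\pi : \pi \in S_m\}$, and build the verifier $\textbf{F}_{C_m}$ of Definition \ref{CK_C}. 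By Corollary \ref{boundtimeofCKC} its running time is bounded by some $K$; because each $s$ admits a normal proof inside $S_m$, Theorem \ref{self_check} forces the matching checker to return $1$, so $\textbf{F}_{C_m}(s)=1$ for every $s$. Theorem \ref{short_time_tm} then promotes this to a ZFC proof of $\forall s\,\textbf{F}_{C_m}(s)=1$, and Lemma \ref{fc1eqvckp} combined with Theorem \ref{ck1toms1} delivers the sought ZFC proof of $\forall s(M(s)=1)$.

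For the $(\Rightarrow)$ direction, let $\pi_0$ be a ZFC proof of $\forall s(M(s)=1)$ of ASCII length $L$. For any specific string $s$, the axioms in $\textbf{def}_M$ already entail that the table $t$ appearing in $\textbf{T}_{\langle M,s\rangle}$ is a table of $M$ on some input, and $\pi_0$ says every such computation halts with $1$ on the second tape cell. Concatenating $\pi_0$ with a fixed schema of universal instantiation and modus ponens over the $\textbf{def}_M$-axioms yields a normal proof from $\langle \textbf{T}_{\langle M,s\rangle}, \textbf{AL}\rangle$ of $\exists i\,t_{i,1}=(1, q_{halt}, 1)$. Since this schema does not reference any particular bit of $s$, its ASCII length is a constant $C$ independent of $s$, so $\textbf{FS}(M,s,1) \leq L + C$ uniformly.

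The main obstacle is the length-uniformity claim in the $(\Rightarrow)$ direction. I need to be careful that the instantiation schema really does not need to invoke the input-specific axioms $\textbf{input}_s$, whose total ASCII length scales as $O(\|s\|)$ and would destroy the uniform bound. The delicate point is arranging the derivation so that $\forall s(M(s)=1)$ combines with the machine-description block $\textbf{def}_M$ and a single "there exists an initial configuration" assertion to yield the target formula without ever having to parse the bits of $s$ one by one. If the straightforward formulation does not admit such an abstract application, one would fall back on the fact that the argument of $M$ in $\pi_0$ is a bound variable and all input-dependence is absorbed into the $t_{0,\cdot}$ row already described in $\textbf{T}_{\langle M,s\rangle}$, so a constant-length proof transformation suffices.
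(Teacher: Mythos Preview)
Your $(\Leftarrow)$ direction is exactly the paper's intent: the paper's entire proof of this corollary is the single sentence ``This statement is obviously true from Theorem \ref{keytheorem},'' and the constructive content of that theorem's reductio is precisely the implication ``bounded $\textbf{FS}$ $\Rightarrow$ ZFC-provable'' that you extract. So on that half you and the paper coincide.

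On the $(\Rightarrow)$ direction you go well beyond the paper, which says nothing about it at all (the one-line appeal to Theorem \ref{keytheorem} only delivers the other implication, since that theorem is the contrapositive of $(\Leftarrow)$). Your argument---take a fixed ZFC proof $\pi_0$ of the universal statement and append a uniform instantiation schema to obtain a normal proof from $\langle\textbf{T}_{\langle M,s\rangle},\textbf{AL}\rangle$---is the natural one, and your identification of the only delicate point (that the appended schema must avoid citing the $\textbf{input}_s$ axioms, whose total length grows with $\|s\|$) is correct. Your proposed resolution is also the right one: the axiom $\varphi_k\in\textbf{def}_M$ already asserts abstractly that row~$0$ of $t$ encodes \emph{some} finite input, so any reasonable ZFC formalization of $\forall s(M(s)=1)$ can be discharged against the conjunction of the $\textbf{def}_M$ axioms alone, without ever unpacking the bits $s_0,\dots,s_l$. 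Since $\textbf{def}_M$ has fixed length independent of $s$, the bound $\textbf{FS}(M,s,1)\le L+C$ follows.

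In short: your proposal is correct; the $(\Leftarrow)$ half reproduces the paper's approach, and your $(\Rightarrow)$ half supplies an argument the paper simply omits.
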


\begin{proof}
	This statement is obviously true from the \textbf{theorem} \ref{keytheorem}.
\end{proof}

Indeed, the proof of \textbf{theorem} \ref{keytheorem} has shown a procedure how to search a proof of a general conclusion(for example $ \forall s \in \omega ( M(s) = 1) $), from some finite concrete examples. That is, the procedure is to find a adjoint checkers set $C_n $ which is large enough to satisfies the following two statements:  $$\forall s \in \{0,1\}^*\  \|s\|\ge K \rightarrow t_{\textbf{F}_{C_n}}(\|s\|) < \|s\|$$and $$\|s\| \le K \rightarrow \textbf{F}_{C_n}(s) = 1$$

Now we give this procedure as an explicit algorithm which will halts if and only if the formula $\forall s M(s)=1$ is not independant of ZFC:\\

\textbf{Algorithm}

\begin{enumerate}
	\item Begin with the  checkers set $C $ = empty set,
	\item \label{iteration} Since the \textbf{corollary} \ref{boundtimeofCKC}, we can find a number, $K$, such that $t_{\textbf{F}_{C}}(\|s\|) < K$  on any input $s$,
	\item Compute $M(s)$ and $\textbf{F}_C(s)$ on all $\|s\| \le K$,
	\item If  $\forall s (\|s\| \le K) \rightarrow M(s)=1 $ and $\|s\| \le K \rightarrow \textbf{F}_{C}(s) = 1$, then from the proof of \textbf{theorem} \ref{keytheorem}, $ \forall s  ( M(s) = 1) $ can be proved in ZFC, halts,
	\item Else if there exist $\|s\| \le K$ and $M(s)\ne 1$, then we can prove $$\neg \forall s ( M(s) = 1)$$ halts,
	\item \label{al_step6}Else if $\forall s (\|s\| \le K) \rightarrow M(s)=1 $, but $\exists r( \|r\| \le K) \wedge ( \textbf{F}_{C}(r) = 0) $, then adding an adjoint checker $\textbf{CK}_{\pi}$ to the  checkers set $C $, where  $\pi$ is a shortest normal proof of $M(r)=1$ and  $\textbf{CK}_{\pi}$ is the adjoint checker of $\pi$,
	\item goto \ref{iteration}

\end{enumerate}

In practice, we can improve this algorithm by using neural network technique at the step \ref{al_step6} to searching the shortest normal proof. The most interesting thing we will see in a later article is, that the algorithm seemingly to imply some sophisticated processes, such as training neural network,  cannot be proved being effective in formal system, though it is practically effective.

\end{document}